\title[Propagation of polynomial phase space singularities]{Propagation of polynomial phase space singularities for Schr\"odinger equations with quadratic Hamiltonians}
\author[P. Wahlberg]{Patrik Wahlberg}
\address{Department of Mathematics, Linn{\ae}us University, SE--35195 V{\"a}xj{\"o}, Sweden}
\email{patrik.wahlberg@lnu.se}
\numberwithin{equation}{section}          
\newtheorem{thm}{Theorem}
\numberwithin{thm}{section}
\newcommand{\rubrik}{}
\newtheorem{prop}[thm]{Proposition}
\newtheorem{cor}[thm]{Corollary}
\newtheorem{lem}[thm]{Lemma}
\theoremstyle{definition}
\newtheorem{defn}[thm]{Definition}
\theoremstyle{remark}
\newtheorem{rem}[thm]{Remark}              
\newcommand{\pd}[1] {\partial ^#1}
\newcommand{\ro}{\mathbb R}
\newcommand{\no}{\mathbb N}
\newcommand{\rr}[1]{\mathbb R^{#1}}
\newcommand{\nn}[1]{\mathbb N^{#1}}
\newcommand{\co}{\mathbb C}
\newcommand{\cc}[1]{\mathbb C^{#1}}
\newcommand{\Ker}{\operatorname{Ker}}
\newcommand{\ep}{\varepsilon}
\newcommand{\fy}{\varphi}
\newcommand{\supp}{\operatorname{supp}}
\newcommand{\eabs}[1]{\langle #1\rangle}
\newcommand{\Sp}{\operatorname{Sp}}
\newcommand{\ssp}{\operatorname{sp}}
\newcommand{\Mp}{\operatorname{Mp}}
\newcommand{\charac}{\operatorname{char}}
\newcommand{\cS}{\mathscr{S}}
\newcommand{\cK}{\mathscr{K}}
\newcommand{\wh}{\widehat}
\newcommand{\re}{{\rm Re} \, }
\newcommand{\im}{{\rm Im} \, }
\newcommand{\J}{\mathcal{J}}
\def\la{\langle}
\def\ra{\rangle}
\begin{document}

\begin{abstract}
We study propagation of phase space singularities for a Schr\"odinger equation with a Hamiltonian that is the Weyl quantization of a quadratic form with non-negative real part. 
Phase space singularities are measured by the lack of polynomial decay of given order in open cones in the phase space, which gives a parametrized refinement of the Gabor wave front set.  
The main result confirms the fundamental role of the singular space associated to the quadratic form for the propagation of phase space singularities. 
The singularities are contained in the singular space, and propagate 
in the intersection of the singular space and the initial datum singularities along the flow of the Hamilton vector field associated to the imaginary part of the quadratic form. 
\end{abstract}

\keywords{Schr\"odinger equation, heat equation, propagation of singularities, phase space singularities, parametrized Gabor wave front set.
MSC 2010 codes: 35A18, 35A21, 35Q40, 35Q79, 35S10.}

\maketitle

\section{Introduction}

We study the initial value Cauchy problem for equations of Schr\"odinger type
\begin{equation*}
\left\{
\begin{array}{rl}
\partial_t u(t,x) + q^w(x,D) u (t,x) & = 0,  \qquad t \geqslant 0, \quad x \in \rr d, \\
u(0,\cdot) & = u_0,  
\end{array}
\right.
\end{equation*}
where $u_0$ is a tempered distribution on $\rr d$, $q=q(x,\xi)$ is a quadratic form on the phase space $(x,\xi) \in T^* \rr d$ with $\re q \geqslant 0$, and $q^w(x,D)$ is a Weyl pseudodifferential operator. 
This family of equations contains as particular cases the proper Schr\"odinger equation where $q(x,\xi)= i |\xi|^2$, the harmonic oscillator where $q(x,\xi)= i (|x|^2 + |\xi|^2)$ and the heat equation where $q(x,\xi)= |\xi|^2$.

We show results on the propagation of a parametrized version of the Gabor wave front set.
The parametrized notion is called the $s$-Gabor wave front set (or singularities) and denoted $WF_s(u_0)$ for $s \in \ro$.
We show how the singularities of the initial datum $u_0$ are propagated to the solution $u(t,\cdot)$ at time $t>0$, possibly with a change of parameter $s$.
The Gabor wave front set, introduced by H\"ormander \cite{Hormander1}, measures the phase space directions in which a tempered distribution does not behave like a Schwartz function. A tempered distribution has thus empty Gabor wave front set if and only if it is a Schwartz function. In this way the Gabor wave front set is a measure of the lack of 
global regularity, in the sense of both smoothness and decay at infinity. 

To be more precise the Gabor wave front set $WF(u)$ of $u \in \cS'(\rr d)$ is defined negatively as follows. 
For a point in phase space $z \in T^* \rr d \setminus 0$, $z \in T^* \rr d \setminus WF(u)$ means that the short-time Fourier transform of $u$ decays like $C_s \eabs{\cdot}^{-s}$, $C_s>0$, for each $s \in \ro$, in an open conic subset of $T^* \rr d \setminus 0$ that contains $z$. 
The $s$-Gabor wave front set that we introduce and study in this paper is 
a parametrized version of the Gabor wave front set: 
The polynomial decay for each order $s$ in the definition of the Gabor wave front set is relaxed to polynomial decay up to a fixed order $s \in \ro$. Thus $WF_s(u) \subseteq WF(u)$ for all $s \in \ro$ and $u \in \cS'(\rr d)$.  

For the propagation of the $s$-Gabor wave front set
the case of a purely imaginary-valued quadratic form $q$ serves as a reference. 
Denote the solution operator (propagator) of the Schr\"odinger equation by 
$e^{-t q^w(x,D)}$. 
It is then a straightforward consequence of the corresponding result for the Gabor wave front set (see e.g. \cite{Cordero5}) that we have exact propagation of the $s$-Gabor wave front set as  
\begin{equation*}
WF_s(e^{-t q^w(x,D)} u_0) = e^{-2 i t F} WF_s (u_0), \quad t \in \ro, \quad u_0 \in \cS'(\rr d), \quad s \in \ro, 
\end{equation*}
where 
\begin{equation}\label{Jdef}
\J =
\left(
\begin{array}{cc}
0 & I \\
-I & 0
\end{array}
\right) \in \rr {2d \times 2d}, 
\end{equation}
$Q$ is the symmetric matrix that defines $q$, and $F=\J Q$ is the Hamiltonian matrix corresponding to $q$.
This exact propagation is explained by the metaplectic representation. 
Note that the time parameter $t$ is real-valued. The propagator $e^{-t q^w(x,D)}$ is in fact a group. 

If $q$ has a non-negative real part then time has a direction:
The propagation is studied for $t \geqslant 0$ instead of $t \in \ro$ and 
the propagator $e^{-t q^w(x,D)}$ is a semigroup. 
The above equality for the propagation of the $s$-Gabor singularities is then replaced by an inclusion. 

The results of this paper assume that $q$ has a non-negative real part.  
We prove inclusions for $WF_r(e^{- t q^w(x,D)} u_0)$ in terms of $WF_s(u_0)$ and $F$ for $t \geqslant 0$, with 
$r=s$ or $r$ smaller than $s$, either $r < s - 4d$ or $r < s - 8d$. 
A main result is (see Corollary \ref{propagationsing2})
\begin{align}\label{Gaborpropagation}
WF_r (e^{-t q^w(x,D)}u) 
& \subseteq  \left( e^{2 t \im F} \left( WF_s (u) \cap S \right) \right) \cap S, \quad t > 0, 
\end{align}
where $r < s-8d$, $\re F = 2^{-1}(F + \overline{F})$, $\im F = (2i)^{-1}(F- \overline{F})$ and 
\begin{equation*}
S=\Big(\bigcap_{j=0}^{2d-1} \Ker\big[\re F(\im F)^j \big]\Big) \cap T^*\rr d \subseteq T^*\rr d 
\end{equation*}
is the singular space of the quadratic form $q$. 
This result is a refinement of \cite[Theorem 5.2]{Rodino2} that concerns the Gabor wave front set. 
The singular space plays a decisive role for the spectral and hypoelliptic analysis of non-elliptic quadratic operators, cf. \cite{Hitrik1,Hitrik2,Hitrik3,Pravda-Starov1,Pravda-Starov2,Viola1,Viola2}. 

If we assume the Poisson bracket condition $\{q, \overline q \} \equiv 0$ then $S = \Ker (\re F)$. 
Then the inclusion 
\begin{equation*}
WF_s(e^{- t q^w(x,D)} u) \subseteq  
\left( e^{2 t \, \im F} \, \left( WF_s(u) \cap \Ker (\re F) \right) \right) \cap \Ker (\re F), \quad t > 0,
\end{equation*}
holds for $s \in \ro$ (see Proposition \ref{propagationsing3}).
Note that there is no loss of regularity $s$ in this case.  

For the heat equation 
\begin{equation*}
\partial_t u(t,x) - \Delta_x u(t,x) = 0, \quad t \geqslant 0, \quad x \in \rr d, 
\end{equation*}
Proposition \ref{propagationsing3} implies
\begin{equation}\label{Gaborpropheat}
WF_s (e^{-t q^w(x,D)} u_0) 
\subseteq  WF_s (u_0) \cap ( \rr d \times \{ 0 \} ), \quad t > 0, \quad u_0 \in \cS'(\rr d), 
\end{equation}
for $s \in \ro$. 
Therefore, if $s \in \ro$, $u_0 \in \cS'(\rr d)$ and
\begin{equation}\label{WFsassumption0}
WF_s(u_0) \cap ( \rr d \times \{ 0 \} ) = \emptyset 
\end{equation}
then 
\begin{equation}\label{WFsconclusion0}
WF_s (e^{-i t q^w(x,D)} u_0) 
= \emptyset, \quad t > 0.  
\end{equation} 
The regularity assumption in a conic neighborhood of the phase space directions $(\rr d \setminus \{0\}) \times \{0\}$ on the initial datum \eqref{WFsassumption0} gives the global phase space isotropic conclusion \eqref{WFsconclusion0} on the solution $e^{-t q^w(x,D)} u$ for all $t>0$. 

Similarly to the corresponding result for the Gabor wave front set (see \cite[Section 6.2]{Rodino2}), there is an immediate regularizing effect of the heat propagator provided the initial datum has some regularity in the  directions $(\rr d \setminus \{0\}) \times \{0\}$. 

Note that for $s>0$
\begin{equation*}
(\rr d \setminus 0 ) \times \{ 0 \} = WF_s( 1 ). 
\end{equation*}
The heat equation is thus immediately regularizing if $WF_s(u_0)$ is disjoint from $WF_s(1)$ for $s>0$. 

Our results on the propagation of the $s$-Gabor wave front set are refinements of the corresponding results for the Gabor wave front set derived in \cite{Rodino2}. The results in the current paper rely crucially on \cite[Theorem~4.3]{Rodino2} which is an inclusion for the Gabor wave front set of the Schwartz kernel of the propagator. Another important ingredient is Theorem \ref{WFphaseincl} (see Section \ref{secpropagation}) which is an $s$-Gabor front set version of H\"ormander's inclusion \cite[Proposition~2.11]{Hormander1} concerning the Gabor wave front set. Our inclusion reads
\begin{equation}\label{WFpropbasic}
WF_r (\cK u) \subseteq WF(K)' \circ WF_s(u)
\end{equation}
for a linear operator $\cK: \cS (\rr d) \mapsto \cS (\rr d)$ with Schwartz kernel $K \in \cS' (\rr {2d})$, expressed with the twisted Gabor wave front set $WF(K)'$ of the Schwartz kernel, where the fourth coordinate is reflected. 
The Schwartz kernel $K \in \cS' (\rr {2d})$ is assumed to satisfy some conditions regarding its Gabor wave front set, and has a parameter $m \in \ro$. 
The parameters $s,r \in \ro$ are restricted as $r < s - m - 4 d$. 
Since \cite[Theorem~4.3]{Rodino2} gives control of $WF(K)$ when $\cK=e^{-t q^w(x,D)}$ we get propagation of $s$-Gabor singularities results from \eqref{WFpropbasic}. 

The recent papers \cite{Cordero5,Cordero6,Nicola2} treat propagation of Gabor-type wave front sets for various classes of Schr\"odinger equations. These classes admit the presence of potentials with certain properties and symbols $q$ of greater generality than quadratic forms, but they do not admit the quadratic form $q$ to have nonzero real part. 

Concerning the organization of the paper, Section \ref{prelim} fixes notation, introduces background results and defines the $s$-Gabor wave front set. 
In Section \ref{formulation} we specify the Schr\"odinger equation and shortly discuss the semigroup theory underlying the construction of the solution operator when $u_0 \in L^2(\rr d)$. 
In Section \ref{secmicrolocality} some properties of the $s$-Gabor wave front set are proved: independence of the window function, its relation to the Gabor wave front set, microlocality with respect to pseudodifferential operators with symbol in the H\"ormander class $S_{0,0}^0$ and symplectic invariance. Some examples of $s$-Gabor wave front sets of basic tempered distributions are also given. 

Section \ref{secpropagation} is devoted mostly to the proof of the propagation result Theorem \ref{WFphaseincl}. Combined with 
\cite[Theorem~4.3]{Rodino2} we get the propagation of singularities results Corollaries \ref{propagationsing1}, \ref{propagationsing2} and Proposition \ref{propagationsing3}. 

Section \ref{secequations} concerns some examples of concrete equations and how they propagate the $s$-Gabor wave front set according to our results. 

\section{Preliminaries}\label{prelim}

The gradient of a function $f$ with respect to the variable $x \in \rr d$ is denoted by $f'_x$ and
the mixed Hessian matrix with respect to $x \in \rr d$ and $y \in \rr n$ is denoted $f_{x y}''$. 
The Fourier transform of $f \in \mathscr S(\rr d)$ (the Schwartz space) is normalized as
\begin{equation*}
\mathscr{F} f(\xi) = \wh f(\xi) = \int_{\rr d} f(x) e^{- i \la x, \xi \ra} dx,
\end{equation*}
where $\la x, \xi \ra$ denotes the inner product on $\rr d$. The topological dual of $\mathscr S(\rr d)$ is the space of tempered distributions $\mathscr S'(\rr d)$.

The Japanese bracket is $\eabs{x} = (1+|x|^2)^{1/2}$ for $x \in \rr d$. 
For $s \in \ro$ the weighted $L^1$ space $L_s^1(\rr d)$ has norm $f \mapsto \| f \eabs{\cdot}^s \|_{L^1(\rr d)}$. 
An open ball in $\rr d$ of radius $\ep>0$ is denoted $B_\ep (\rr d)= \{ x \in \rr d: \, |x| < \ep \}$.
The unit sphere in $\rr d$ is denoted $S_{d-1} = \{ x \in \rr d: \ |x|=1 \}$. 
For a matrix $A \in \rr {d \times d}$, $A \geqslant 0$ means that $A$ is positive semidefinite, and $A^t$ is the transpose. 
In estimates we denote by $f (x) \lesssim g (x)$ that $f(x) \leqslant C g(x)$ holds for some constant $C>0$ and all $x$ in the domain of $f$ and $g$. 
If $f(x) \lesssim g(x) \lesssim f(x)$ then we write $f(x) \asymp g(x)$. 

We denote the translation operator by $T_x f(y)=f(y-x)$, the modulation operator by $M_\xi f(y)=e^{i \la y, \xi \ra} f(y)$, $x,y,\xi \in \rr d$, and the
phase space translation operator by $\Pi(z) = M_\xi T_x$, $z=(x,\xi) \in \rr {2d}$. 
Given a window function $\varphi \in \mathscr S(\rr d) \setminus \{ 0 \}$, the short-time Fourier transform (STFT) (cf. \cite{Grochenig1}) of $f \in \mathscr S'(\rr d)$ is defined by
\begin{equation}\nonumber
V_\varphi f(z) = ( f, \Pi(z) \varphi ), \quad z \in \rr {2d}.
\end{equation}
Here $(\cdot,\cdot)$ denotes the conjugate linear action of $\cS'$ on $\cS$ which is
consistent with the inner product $(\cdot,\cdot)_{L^2}$ that is conjugate linear in the second argument.
The function $z \mapsto V_\varphi f(z)$ is smooth and its modulus is bounded by $C \eabs{z}^k$ for all $z \in \rr {2d}$ for some $C,k \geqslant 0$.
If $\varphi \in \cS(\rr d)$, $\| \varphi \|_{L^2}=1$ and $f \in \cS'(\rr d)$,
the STFT inversion formula reads (cf. \cite[Corollary~11.2.7]{Grochenig1})
\begin{equation}\label{STFTrecon}
(f,g ) = (2 \pi)^{-d} \int_{\rr {2d}} V_\varphi f(z) ( \Pi(z) \varphi,g ) \, dz, \quad g \in \cS(\rr d).
\end{equation}

Let $\varphi \in \mathscr S(\rr d) \setminus 0$ and let $0 < w \in L_{\rm loc}^\infty (\rr {2d})$ be a weight function that satisfies 
\begin{equation*}
|w(x,\xi)| \lesssim \eabs{(x,\xi)}^s, \quad (x,\xi) \in \rr {2d}, 
\end{equation*}
for some $s \geqslant 0$. 
For $p,q \in [1,\infty]$ the weighted modulation space (cf. \cite{Grochenig1}) $M_{w}^{p,q}(\rr d)$ is the space of $f \in \cS'(\rr d)$ such that the norm
\begin{equation*}
\| f \|_{M_{w}^{p,q}} = \left( \int_{\rr d} \left( \int_{\rr d} \left| V_\varphi f(x,\xi) \, w(x,\xi) \right|^p \, dx \right)^{q/p} \, d \xi \right)^{1/q}
\end{equation*}
is finite, with natural modifications when $p=\infty$ or $q=\infty$. The modulation spaces are Banach spaces and were introduced by Feichtinger \cite{Feichtinger1}. 

The Weyl quantization (cf. \cite{Folland1,Hormander0,Shubin1}) is the map from symbols to operators defined by
\begin{equation}\nonumber
a^w(x,D) f(x) = (2 \pi)^{-d} \iint_{\rr {2d}} e^{i \la x-y,\xi \ra} a \left( \frac{x+y}{2},\xi \right)  \, f(y) \, dy \, d \xi
\end{equation}
for $a \in \mathscr S(\rr {2d})$ and $f \in \mathscr S(\rr d)$. The latter conditions can be relaxed in various ways.
By the Schwartz kernel theorem, any continuous linear operator $\cS(\rr d) \mapsto \cS'(\rr d)$
can be written as a Weyl quantization for a unique symbol $a \in \cS'(\rr {2d})$.

\begin{defn}\label{shubinclasses1}\cite{Shubin1}
For $m\in \ro$, the Shubin symbol class $G^m$ is the subspace of all
$a \in C^\infty(\rr {2d})$ such that for every
$\alpha,\beta \in \nn d$ 
\begin{equation*}
|\partial_x^\alpha \partial_\xi^\beta a(x,\xi)| 
\lesssim \langle (x,\xi) \rangle^{m-|\alpha|-|\beta|}, \quad (x,\xi)\in \rr {2d}. 
\end{equation*}
\end{defn}
We denote $G^\infty = \cup_{m \in \ro} G^m$. 

\begin{defn}\label{hormanderclasses}\cite{Hormander0}
For $m\in \ro$, $0 \leqslant \rho \leqslant 1$, $0 \leqslant \delta < 1$, the H\"ormander symbol class $S_{\rho,\delta}^m$ is the subspace of all
$a \in C^\infty(\rr {2d})$ such that for every
$\alpha,\beta \in \nn d$ 
\begin{equation*}
|\partial_x^\alpha \partial_\xi^\beta a(x,\xi)| 
\lesssim \eabs{\xi}^{m - \rho|\beta| + \delta |\alpha|}, \quad (x,\xi)\in \rr {2d}.  
\end{equation*}
\end{defn}

Both the Shubin symbols and the H\"ormander symbols are Fr\'echet spaces with respect to naturally defined seminorms. 

The following definition introduces conic sets in the phase space $T^* \rr d \simeq \rr {2d}$. 
A set is conic if it is invariant under multiplication with positive reals. 
The distinction to the frequency-conic sets that are used in the definition of the (classical) $C^\infty$ wave front set \cite{Hormander0} should be noted. 

\begin{defn}
Given $a \in G^m$, a point in the phase space $z_0 \in T^* \rr d \setminus 0$ is called non-characteristic for $a$ provided there exist $A,\ep>0$ and an open conic set $\Gamma \subseteq T^* \rr d \setminus 0$ such that $z_0 \in \Gamma$ and
\begin{equation*}
|a(z )| \geqslant \ep \eabs{z}^m, \quad z \in \Gamma, \quad |z| \geqslant A.
\end{equation*}
\end{defn}

The characteristic set $\charac(a)$ of $a \in G^m$ is the complement in $T^* \rr d \setminus 0$ of the set of non-characteristic points. 
The Gabor wave front set is defined as follows. 

\begin{defn}
\cite{Hormander1}
If $u \in \mathscr S'(\rr d)$ then the Gabor wave front set is 
\begin{equation*}
WF(u) = \bigcap_{a \in G^\infty: \ a^w(x,D) u \in \cS} \charac(a) \subseteq T^* \rr d \setminus 0. 
\end{equation*}
\end{defn}

According to \cite[Proposition~6.8]{Hormander1} and \cite[Corollary~4.3]{Rodino1}, the Gabor wave front set can be characterized microlocally by means of the STFT as follows. 
If $u \in \mathscr S'(\rr d)$ and $\varphi \in \mathscr S(\rr d) \setminus 0$ then $z_0 \in T^*\rr d \setminus 0$ satisfies $z_0 \notin WF(u)$ if and only if there exists an open conic set $\Gamma_{z_0} \subseteq T^*\rr d \setminus 0$ containing $z_0$ such that
\begin{equation}\label{WFchar}
\sup_{z \in \Gamma_{z_0}} \eabs{z}^N |V_\varphi u(z)| < \infty \quad \forall N \geqslant 0.
\end{equation}

The microsupport $\mu \supp (a)$ of $a \in G^m$ \cite{Schulz1} is defined as follows. 
An element $z_0 \in T^* \rr d \setminus 0$ satisfies $z_0 \notin \mu \supp (a)$ if there exists an open cone $\Gamma \subseteq T^* \rr d \setminus 0$ that contains $z_0$ and is such that 
\begin{equation*}
\sup_{z \in \Gamma} \eabs{z}^N |\pd \alpha a (z)| < \infty, \quad \alpha \in \nn {2d}, \quad \forall N \geqslant 0.
\end{equation*}
The following list summarizes the most important properties of the Gabor wave front set.

\begin{enumerate}

\item If $u \in \cS'(\rr d)$ then $WF(u) = \emptyset$ if and only if $u \in \cS (\rr d)$ \cite[Proposition~2.4]{Hormander1}.

\item If $u \in \mathscr S'(\rr d)$ and $a \in G^m$ then (cf. \cite{Hormander1,Schulz1,Rodino1})
\begin{align*}
WF( a^w(x,D) u) 
& \subseteq WF(u) \cap \mu \supp (a) \\
& \subseteq WF( a^w(x,D) u) \ \bigcup \ \charac (a). 
\end{align*}

\item If $a \in S_{0,0}^0$ and $u \in \cS'(\rr d)$ then by \cite[Theorem~5.1]{Rodino1}
\begin{equation}\label{microlocal2}
WF(a^w(x,D) u) \subseteq WF(u).
\end{equation}
In particular $WF(\Pi(z) u) = WF(u)$ for any $z \in \rr {2d}$.  

\end{enumerate}

As three basic examples of the Gabor wave front set we have (cf. \cite[Example 6.4--6.6]{Rodino1})
\begin{equation}\label{example1}
WF(\delta_x) = \{ 0 \} \times (\rr d \setminus 0), \quad x \in \rr d,  
\end{equation}
\begin{equation}\label{example2}
WF(e^{i \la \cdot,\xi \ra}) = (\rr d \setminus 0) \times  \{ 0 \}, \quad \xi \in \rr d, 
\end{equation}
and 
\begin{equation}\label{example3}
WF(e^{i \la x, A x \ra/2 } ) = \{ (x, Ax): \, x \in \rr d \setminus 0 \}, \quad A \in \rr {d \times d} \quad \mbox{symmetric}. 
\end{equation}

The canonical symplectic form on $T^* \rr d$ is
\begin{equation*}
\sigma((x,\xi), (x',\xi')) = \la x' , \xi \ra - \la x, \xi' \ra, \quad (x,\xi), (x',\xi') \in T^* \rr d.
\end{equation*}
With the matrix \eqref{Jdef} the symplectic form can be expressed as
\begin{equation*}
\sigma((x,\xi), (x',\xi')) = \la \J (x,\xi), (x',\xi') \ra, \quad (x,\xi), (x',\xi') \in T^* \rr d. 
\end{equation*}

To each symplectic matrix $\chi \in \Sp(d,\ro)$ is associated a unitary operator $\mu(\chi)$ on $L^2(\rr d)$, determined up to a complex factor of modulus one, such that
\begin{equation}\label{symplecticoperator}
\mu(\chi)^{-1} a^w(x,D) \, \mu(\chi) = (a \circ \chi)^w(x,D), \quad a \in \cS'(\rr {2d})
\end{equation}
(cf. \cite{Folland1,Hormander0}).
The operator $\mu(\chi)$ is a homeomorphism on $\mathscr S$ and on $\mathscr S'$.

The mapping $\Sp(d,\ro) \ni \chi \mapsto \mu(\chi)$ is called the \emph{metaplectic representation} \cite{Folland1,Taylor1}.
More precisely it is a representation of the so called $2$-fold covering group of $\Sp(d,\ro)$, which is called the metaplectic group 
and denoted $\Mp(d,\ro)$.
The metaplectic representation satisfies the homomorphism relation modulo a change of sign:
\begin{equation*}
\mu( \chi \chi') = \pm \mu(\chi ) \mu(\chi' ), \quad \chi, \chi' \in \Sp(d,\ro).
\end{equation*}

According to \cite[Proposition~2.2]{Hormander1} the Gabor wave front set is symplectically invariant as
\begin{equation}\label{symplecticinvarianceWF}
WF( \mu(\chi) u) = \chi WF(u), \quad \chi \in \Sp(d, \ro), \quad u \in \cS'(\rr d).
\end{equation}

For $s \in \ro$ we introduce the $s$-Gabor wave front set $WF_s (u)$ of $u \in \cS'(\rr d)$. 
This concept is a refinement of the Gabor wave front set $WF(u)$ in the sense that $WF_s(u) \subseteq WF(u)$ for all $s \in \ro$.
The $s$-Gabor wave front set is the notion of singularity we study is this paper.

\begin{defn}\label{wavefront2}
Let $\varphi \in \mathscr S(\rr d) \setminus 0$, $u \in \mathscr S'(\rr d)$ and $s \in \ro$. 
Then $z_0 \in T^*\rr d \setminus 0$ satisfies $z_0 \notin WF_s (u)$ if there exists an open conic set $\Gamma_{z_0} \subseteq T^*\rr d \setminus 0$ containing $z_0$ such that
\begin{equation*}
\sup_{z \in \Gamma_{z_0}} \eabs{z}^s |V_\varphi u(z)| < \infty.
\end{equation*}
\end{defn}

We will show in Proposition \ref{sGaborinvariance} that the definition of $WF_s (u)$ does not depend on the window function $\fy \in \mathscr S(\rr d) \setminus 0$. 

It follows that $WF_s (u) = \emptyset$ if and only if $u \in \cS'(\rr d)$ satisfies
\begin{equation*}
|V_\varphi u(z)| \lesssim \eabs{z}^{-s}, \quad z \in \rr {2d}.  
\end{equation*}
For any $u \in \cS'(\rr d)$ the STFT $V_\varphi u$ is polynomially bounded so $WF_s (u) = \emptyset$ provided $s$ is sufficiently small for each $u \in \cS'(\rr d)$. 

The $s$-Gabor wave front set $WF_s (u)$ increases with the index $s$: 
\begin{equation*}
t \geqslant s \quad \Longrightarrow \quad WF_s(u) \subseteq WF_t(u). 
\end{equation*}

From $WF_s(u) \subseteq WF(u)$ and \eqref{example1} we have for any $s \in \ro$
\begin{equation*}
WF_s(\delta_0) \subseteq \{ 0 \} \times (\rr d \setminus 0 ).  
\end{equation*}
If $\fy \in \cS \setminus 0$ satisfies $\fy(0) \neq 0$ then for $\xi \in \rr d \setminus 0$ and $t>0$
\begin{equation*}
|V_\fy \delta_0 (0, t \xi)| = |\fy(0)| \neq 0
\end{equation*}
so 
\begin{equation*}
\{ 0 \} \times (\rr d \setminus 0 ) \subseteq WF_s(\delta_0), \quad s>0.
\end{equation*}
Hence 
\begin{equation}\label{example1sa0}
WF_s(\delta_0) = \{ 0 \} \times (\rr d \setminus 0 ), \quad s>0, 
\end{equation}
Since 
\begin{equation*}
|V_\fy \delta_0 (x, \xi)| = |\fy(-x)| \lesssim 1, \quad (x,\xi) \in \rr {2d}, 
\end{equation*}
we have 
\begin{equation}\label{example1sb0}
WF_s(\delta_0) = \emptyset, \quad s \leqslant 0.  
\end{equation}

\section{Problem formulation and the solution operator}
\label{formulation}

We study the initial value Cauchy problem for a Schr\"odinger equation of the form
\begin{equation}\label{schrodeq}
\left\{
\begin{array}{rl}
\partial_t u(t,x) + q^w(x,D) u (t,x) & = 0, \\
u(0,\cdot) & = u_0, 
\end{array}
\right.
\end{equation}
where $t \geqslant 0$, $x \in \rr d$ and $u_0 \in \cS'(\rr d)$.
The Hamiltonian $q^w(x,D)$ has a Weyl symbol that is a quadratic form
\begin{equation*}
q(x,\xi) = \la (x, \xi), Q (x, \xi) \ra, \quad x, \, \xi \in \rr d, 
\end{equation*}
where $Q \in \cc {2d \times 2d}$ is a symmetric matrix with $\re Q \geqslant 0$. 
The special case $\re Q = 0$ admits to study the equation for $t \in \ro$ instead of $t \geqslant 0$. 

The \emph{Hamilton map} $F$ corresponding to $q$ is defined by 
\begin{equation*}
\sigma(Y, F X) = q(Y,X), \quad X,Y \in \rr {2d}, 
\end{equation*}
where $q(Y,X)$ is the bilinear polarized version of the form $q$, i.e. $q(X,Y)=q(Y,X)$ and $q(X,X)=q(X)$. 
The Hamilton map $F$ is thus the matrix 
\begin{equation*}
F = \J Q \in \cc {2d \times 2d}
\end{equation*}
with $\J$ defined by \eqref{Jdef}. 

When $u_0 \in L^2(\rr d)$ the equation \eqref{schrodeq} is solved for $t \geqslant 0$ by 
\begin{equation*}
u(t,x) = e^{-t q^w(x,D)} u_0(x)
\end{equation*}
where the solution operator (propagator) $e^{-t q^w(x,D)}$ is the contraction semigroup that is generated by the operator $-q^w(x,D)$. 
Contraction semigroup means a strongly continuous semigroup with $L^2$ operator norm $\leqslant 1$ for all $t \geqslant 0$ \cite{Yosida1}. 
The reason why $- q^w(x,D)$, or more precisely its closure $M_{-q}$ as an unbounded linear operator in $L^2(\rr d)$, generates such a semigroup is explained in \cite[pp.~425--26]{Hormander2}. The contraction semigroup property is a consequence of $M_{- q}$ and its adjoint $M_{-\overline q}$ being \emph{dissipative} operators \cite{Yosida1}, which for $M_{-q}$ means that
\begin{equation*}
\re (M_{-q} u,u) = (M_{-\re q} u,u) \leqslant 0, \quad u \in D(M_{- q}), 
\end{equation*}
$D(M_{-q}) \subseteq L^2(\rr d)$ denoting the domain of $M_{-q}$. 
The fact that $M_{- q}$ and $M_{-\overline q}$ are dissipative follows from the assumption $\re Q \geqslant 0$. 

Our objective in this work is the propagation of the $s$-Gabor wave front set for the Schr\"odinger propagator $e^{-t q^w(x,D)}$. 
This means that we want to find  inclusions for 
\begin{equation*}
WF_r(e^{-t q^w(x,D)} u_0)
\end{equation*}
in terms of $WF_s(u_0)$, $F$ and $t \geqslant 0$ for $u_0 \in \cS'(\rr d)$ and as sharp conditions on $r,s \in \ro$ as possible. 

If $\re Q=0$ then the propagator is given by means of the metaplectic representation. 
To wit, if 
$\re Q=0$ then $e^{-t q^w(x,D)}$ is a group of unitary operators, and we have by \cite[Theorem~4.45]{Folland1}
\begin{equation}\label{propagatorsymplectic}
e^{-t q^w(x,D)} = \mu(e^{-2 i t F}), \quad t \in \ro,  
\end{equation}
where $\mu$ is the metaplectic representation, see  \eqref{symplecticoperator}. 
In this case $F$ is purely imaginary and $i F \in \ssp(d,\ro)$, the symplectic Lie algebra, which implies that $e^{-2 i t F} \in \Sp(d,\ro)$ for any $t \in \ro$ \cite{Folland1}.
Since $\mu(e^{-2 i t F})$ is a continuous operator on $\cS'$, the semigroup theory extends uniquely to initial datum $u_0 \in \cS'(\rr d)$ instead of merely $u_0 \in L^2(\rr d)$. 
According to  
\eqref{symplecticinvarianceWF} we have
\begin{equation}\label{realcase}
WF(e^{-t q^w(x,D)} u_0) = e^{-2 i t F} WF(u_0), \quad t \in \ro, \quad u_0 \in \cS'(\rr d). 
\end{equation}

\section{Properties of the $s$-Gabor wave front set}\label{secmicrolocality}

\begin{lem}\label{convolutioninvariance}
Let $f$ be a measurable function that satisfies for $M \geqslant 0$
\begin{equation}\label{polynomialbound1}
|f(x)| \lesssim \eabs{x}^{M}, \quad x \in \rr d.
\end{equation}
Let $s \in \ro$
and suppose there exists a non-empty open conic set
$\Gamma \subseteq \rr d \setminus 0$ 
such that
\begin{equation}\label{conedecay1}
\sup_{x \in \Gamma} \eabs{x}^s |f(x)| < \infty.
\end{equation}
If
\begin{equation}\label{L1intersection}
g \in \bigcap_{t \geqslant 0} L_{t}^1(\rr d)
\end{equation}
then for any open conic set $\Gamma' \subseteq \rr d \setminus 0$ 
such that
$\overline{\Gamma' \cap S_{d-1}} \subseteq \Gamma$, we have
\begin{equation}\label{conedecay2}
\sup_{x \in \Gamma'} \eabs{x}^s |f * g(x)| < \infty.
\end{equation}
\end{lem}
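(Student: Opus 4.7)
The plan is to split the convolution integral $(f*g)(x)=\int f(x-y)\,g(y)\,dy$ into a near-diagonal piece, where we exploit the conic decay hypothesis on $f$, and a far-off piece, where we give away the decay of $f$ and extract decay from $g$ using \eqref{L1intersection}.

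First I would fix the cone geometry. Since $\overline{\Gamma'\cap S_{d-1}}$ is a compact subset of the open set $\Gamma\cap S_{d-1}$, there is a constant $c\in(0,1/2)$ such that for every $x\in\Gamma'$ and every $y\in\rr d$ with $|y|\leqslant c|x|$, one has $x-y\in\Gamma$; indeed the ray through $x-y$ lies within a small angular neighborhood of the direction $x/|x|$, and for $c$ small enough the angular perturbation stays inside $\Gamma\cap S_{d-1}$. Moreover $|x-y|\asymp|x|$ in this region, so $\eabs{x-y}\asymp\eabs{x}$.

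Next I would carry out the splitting for $x\in\Gamma'$ with $|x|\geqslant 1$:
\begin{equation*}
|f*g(x)|\leqslant \int_{|y|\leqslant c|x|}|f(x-y)||g(y)|\,dy+\int_{|y|>c|x|}|f(x-y)||g(y)|\,dy=I_1(x)+I_2(x).
\end{equation*}
For $I_1(x)$, the previous paragraph gives $x-y\in\Gamma$ and $\eabs{x-y}\asymp\eabs{x}$, so \eqref{conedecay1} yields $|f(x-y)|\lesssim \eabs{x-y}^{-s}\lesssim \eabs{x}^{-s}$, and hence $I_1(x)\lesssim \eabs{x}^{-s}\|g\|_{L^1}$, which is finite by \eqref{L1intersection} with $t=0$. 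For $I_2(x)$, I use the global polynomial bound \eqref{polynomialbound1} together with Peetre's inequality $\eabs{x-y}^M\lesssim \eabs{x}^M\eabs{y}^M$ to get $|f(x-y)|\lesssim\eabs{x}^M\eabs{y}^M$. On the region $|y|>c|x|$ we have $\eabs{y}\gtrsim\eabs{x}$, so for any $N\geqslant 0$
\begin{equation*}
I_2(x)\lesssim \eabs{x}^M\int_{|y|>c|x|}\eabs{y}^M|g(y)|\,dy\lesssim \eabs{x}^{M-N}\|g\|_{L^1_{M+N}}.
\end{equation*}
Choosing $N=M+s$ (which is admissible by \eqref{L1intersection}) gives $I_2(x)\lesssim \eabs{x}^{-s}$, and combining the two estimates yields \eqref{conedecay2}.

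The only real obstacle is the cone-geometry step, i.e.\ arranging $c$ so that $x-y\in\Gamma$ throughout the first region; this is precisely where the hypothesis $\overline{\Gamma'\cap S_{d-1}}\subseteq \Gamma$ rather than merely $\Gamma'\subseteq\Gamma$ is used, and it is a standard compactness argument once stated correctly. Everything else is routine Peetre-inequality bookkeeping, and the freedom to take arbitrarily high moments of $g$ is what lets the far-off piece absorb the growth coming from the polynomial bound on $f$.
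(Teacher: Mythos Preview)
Your argument is correct and follows essentially the same near/far splitting as the paper's proof; the paper splits on $\eabs{y}\leqslant\ep\eabs{x}$ and uses Peetre on $I_1$ rather than your $\eabs{x-y}\asymp\eabs{x}$, but these are cosmetic differences. One small bookkeeping point: your choice $N=M+s$ in the $I_2$ estimate tacitly assumes $s\geqslant -M$ (otherwise $N<0$ and the step $\eabs{y}\gtrsim\eabs{x}\Rightarrow 1\lesssim\eabs{y}^N\eabs{x}^{-N}$ fails); the paper handles this by first increasing $M$ so that $M\geqslant -s$, and you can do the same, or simply note that for $s<-M$ the conclusion is immediate from the global bound $|f*g(x)|\lesssim\eabs{x}^M$.
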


\begin{proof}
By the assumptions \eqref{polynomialbound1} and \eqref{L1intersection} and Peetre's inequality 
\begin{equation*}
\eabs{x+y}^t \lesssim \eabs{x}^t \eabs{y}^{|t|}, \quad x,y \in \rr d, \quad t \in \ro, 
\end{equation*}
we have
\begin{equation*}
|f * g (x)| \lesssim \eabs{x}^M, \quad x \in \rr d, 
\end{equation*}
so it suffices to assume $|x| \geqslant 1$. 

Let $\ep>0$.
We estimate and split the convolution integral as
\begin{equation*}
|f * g(x)|
\leqslant  \underbrace{\int_{\eabs{y} \leqslant \ep \eabs{x}} |f(x-y)| \, | g (y)| \, d y}_{:= I_1}
+ \underbrace{\int_{\eabs{y} > \ep \eabs{x}} |f(x-y)| \, | g (y)| \, d y}_{:= I_2}.
\end{equation*}
Consider $I_1$.
We may assume that $\Gamma'$ is non-empty. 
Since $\eabs{y} \leqslant \ep \eabs{x}$ we have $x-y \in \Gamma$ if $x \in \Gamma'$, $|x| \geqslant 1$, and $\ep>0$ is chosen sufficiently small.
The assumptions \eqref{conedecay1} and \eqref{L1intersection} give
\begin{equation}\label{intuppsk1}
\begin{aligned}
I_1 & \lesssim \int_{\eabs{y} \leqslant \ep \eabs{x}} \eabs{x-y}^{-s} |g (y)| \, d y
\lesssim \eabs{x}^{-s} \int_{\rr d} \eabs{y}^{|s|} |g(y)| \, d y \\
& \lesssim  \eabs{x}^{-s}, \quad x \in \Gamma', \quad |x| \geqslant 1.
\end{aligned}
\end{equation}
Next we estimate $I_2$ using \eqref{polynomialbound1}. 
If necessary we first increase $M$ so that $M \geqslant -s$. 
This gives
\begin{equation}\label{intuppsk2}
\begin{aligned}
I_2 & \lesssim \int_{\eabs{y} > \ep \eabs{x}} \eabs{x-y}^{M} \, |g(y) | \, dy \\
& \lesssim \eabs{x}^{M} \int_{\eabs{y} > \ep \eabs{x}}  \eabs{y}^{-M-s} \, \eabs{y}^{2M+s} \, |g(y) | \, dy \\
& \lesssim \eabs{x}^{M-M-s} \int_{\rr d} \eabs{y}^{2M+s} \, |g(y) | \, dy \\
& \lesssim \eabs{x}^{-s}, \quad x \in \rr d,
\end{aligned}
\end{equation}
again using \eqref{L1intersection}.
A combination of \eqref{intuppsk1} and \eqref{intuppsk2} proves \eqref{conedecay2}.
\end{proof}

As a first application of Lemma \ref{convolutioninvariance} we show that  Definition \ref{wavefront2} does not depend on the Schwartz function $\fy \in \cS(\rr d) \setminus 0$. 

\begin{prop}\label{sGaborinvariance}
Suppose $u \in \cS'(\rr d)$. The definition of the $s$-Gabor wave front set $WF_s(u)$ does not depend on the window function $\fy \in \cS(\rr d) \setminus 0$. 
\end{prop}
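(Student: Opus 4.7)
The plan is to reduce window independence to the STFT change-of-window estimate combined with Lemma \ref{convolutioninvariance}. Let $\varphi, \psi \in \cS(\rr d) \setminus 0$. I would first recall the pointwise bound that follows from the inversion formula \eqref{STFTrecon}: for every $u \in \cS'(\rr d)$ and every $z \in \rr {2d}$,
\begin{equation*}
|V_\psi u(z)| \leqslant (2\pi)^{-d} \| \varphi \|_{L^2}^{-2} \bigl( |V_\varphi u| \ast |V_\varphi \psi|^\vee \bigr)(z),
\end{equation*}
where $h^\vee(z) = h(-z)$. This is obtained by inserting the reproducing formula \eqref{STFTrecon} into the definition of $V_\psi u$, using that $|( \Pi(w)\varphi, \Pi(z)\psi )| = |V_\varphi \psi(z-w)|$. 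Since $\psi \in \cS$, we have $V_\varphi \psi \in \cS(\rr {2d})$, and hence $g := (2\pi)^{-d} \|\varphi\|_{L^2}^{-2} |V_\varphi \psi|^\vee$ belongs to $\bigcap_{t \geqslant 0} L^1_t(\rr {2d})$, satisfying the hypothesis \eqref{L1intersection} of Lemma \ref{convolutioninvariance}.

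Now suppose $z_0 \notin WF_s(u)$ when the window is $\varphi$, so that there exists an open cone $\Gamma_{z_0} \subseteq \rr {2d} \setminus 0$ containing $z_0$ with $\sup_{z \in \Gamma_{z_0}} \eabs{z}^s |V_\varphi u(z)| < \infty$. Choose an open cone $\Gamma'_{z_0}$ containing $z_0$ and small enough that $\overline{\Gamma'_{z_0} \cap S_{2d-1}} \subseteq \Gamma_{z_0}$. Since $V_\varphi u$ is polynomially bounded (being the STFT of a tempered distribution), the hypotheses \eqref{polynomialbound1} and \eqref{conedecay1} of Lemma \ref{convolutioninvariance} are satisfied with $f = V_\varphi u$. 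Applying the lemma yields
\begin{equation*}
\sup_{z \in \Gamma'_{z_0}} \eabs{z}^s \bigl( |V_\varphi u| \ast g \bigr)(z) < \infty,
\end{equation*}
and combining with the pointwise bound above gives $\sup_{z \in \Gamma'_{z_0}} \eabs{z}^s |V_\psi u(z)| < \infty$, i.e.\ $z_0$ is not in the $\psi$-version of $WF_s(u)$.

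By symmetry (interchange the roles of $\varphi$ and $\psi$) the reverse inclusion also holds, so the set $WF_s(u)$ is identical for any two nonzero Schwartz windows. The main technical point is the interplay between the conic shrinking $\overline{\Gamma'_{z_0} \cap S_{2d-1}} \subseteq \Gamma_{z_0}$ and the rapid decay of the convolution kernel $V_\varphi \psi$, which is precisely what Lemma \ref{convolutioninvariance} packages; once that lemma is in hand, the argument is essentially a direct substitution, so no deeper obstacle is expected.
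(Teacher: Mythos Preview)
Your proposal is correct and follows essentially the same route as the paper: both reduce the question to the change-of-window convolution estimate for the STFT (the paper cites \cite[Lemma~11.3.3]{Grochenig1}, you derive it from \eqref{STFTrecon}), then feed $f=|V_\varphi u|$ and $g\in\bigcap_{t\geqslant 0}L_t^1$ into Lemma~\ref{convolutioninvariance} and conclude by symmetry. The only cosmetic difference is that your kernel $|V_\varphi\psi|^\vee$ equals the paper's $|V_\psi\varphi|$, so the arguments are in effect identical.
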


\begin{proof}
Let $\fy,\psi \in \cS(\rr d) \setminus 0$. 
By \cite[Theorem~11.2.3]{Grochenig1} we have for some $M \geqslant 0$
\begin{equation*}
|V_\varphi u (z)| \lesssim \eabs{z}^{M}, \quad z \in \rr {2d}, 
\end{equation*}
and by \cite[Lemma~11.3.3]{Grochenig1} we have 
\begin{equation*}
|V_\psi u(z)| \leqslant (2 \pi)^{-d} \| \fy \|_{L^2} |V_\fy u| * |V_\psi \fy| (z), \quad z \in \rr {2d}. 
\end{equation*}
If $|V_\fy u(z)|$ decays like $\eabs{z}^{-s}$ in a conic set $\Gamma \subseteq T^* \rr d \setminus 0$ containing $z_0 \neq 0$ then by Lemma \ref{convolutioninvariance} we get decay for $|V_\psi u(z)|$ of order $\eabs{z}^{-s}$ in a smaller cone containing $z_0$, since 
\begin{equation*}
V_\psi \fy \in \cS(\rr {2d}) \subseteq \bigcap_{t \geqslant 0} L_{t}^1(\rr {2d}). 
\end{equation*} 
Hence, by symmetry, polynomial decay of order $s \in \ro$ in an open cone around a point in $T^* \rr d \setminus 0$ happens simultaneously for $V_\fy u$ and $V_\psi u$. 
\end{proof}

For a conical subset $\Gamma \subseteq T^* \rr d \setminus 0$ we denote by $\overline \Gamma \subseteq T^* \rr d \setminus 0$ its closure with respect to the usual topology in $T^* \rr d \simeq \rr {2d}$ in $T^* \rr d \setminus 0$. We have the following equality:

\begin{prop}\label{WFuWFsu}
If $u \in \cS'(\rr d)$ then 
\begin{equation*}
WF(u) = \overline{\bigcup_{s \in \ro} WF_s(u)}.
\end{equation*} 
\end{prop}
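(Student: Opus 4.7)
The plan is to prove the two inclusions separately. The forward inclusion will be immediate from the definitions, and the reverse one will follow from a standard compactness/covering argument on the unit sphere.

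For the easy inclusion $\overline{\bigcup_{s \in \ro} WF_s(u)} \subseteq WF(u)$, I would first observe that any point outside $WF(u)$ has, by \eqref{WFchar}, an open conic neighborhood on which $|V_\fy u|$ decays faster than any polynomial, hence in particular faster than $\eabs{\cdot}^{-s}$; this gives $WF_s(u) \subseteq WF(u)$ for every $s \in \ro$. The same characterization \eqref{WFchar} shows that the complement of $WF(u)$ in $T^*\rr d \setminus 0$ is open (if $z_0$ admits the conic neighborhood $\Gamma_{z_0}$ there, so does every $z \in \Gamma_{z_0}$ with the same choice), so $WF(u)$ is closed in $T^*\rr d \setminus 0$ and taking closures preserves the inclusion.

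For the reverse inclusion I would argue by contrapositive. Suppose $z_0 \in T^*\rr d \setminus 0$ lies outside $\overline{\bigcup_{s \in \ro} WF_s(u)}$. Since each $WF_s(u)$ is a conic subset of $T^* \rr d \setminus 0$ and closures of conic sets are conic, $V := (T^*\rr d \setminus 0) \setminus \overline{\bigcup_{s \in \ro} WF_s(u)}$ is an open \emph{conic} neighborhood of $z_0$ that meets no $WF_s(u)$. I would then pick a smaller open conic neighborhood $\Gamma \ni z_0$ whose trace on the unit sphere has compact closure $K := \overline{\Gamma \cap S_{2d-1}} \subseteq V \cap S_{2d-1}$, which is possible because $S_{2d-1}$ is compact and $V \cap S_{2d-1}$ is open.

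The key step is now a covering argument, carried out separately for each $N \in \no$. Since $K \cap WF_N(u) = \emptyset$, every $\omega \in K$ admits an open conic neighborhood $U_{\omega,N}$ with $\sup_{z \in U_{\omega,N}} \eabs{z}^N |V_\fy u(z)| < \infty$. By compactness of $K$, finitely many of these neighborhoods, say $U_{\omega_1, N}, \ldots, U_{\omega_{k_N}, N}$, cover $K$; because each is conic and $\overline{\Gamma}$ is precisely the cone generated by $K$, their union already covers $\overline{\Gamma}$. Summing the resulting finitely many bounds yields $\sup_{z \in \Gamma} \eabs{z}^N |V_\fy u(z)| < \infty$. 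Since $N$ is arbitrary, \eqref{WFchar} gives $z_0 \notin WF(u)$.

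I do not anticipate any serious obstacle. The only point requiring care is that the open sets we work with are genuinely conic, which is why I would exploit at the outset that $\bigcup_{s \in \ro} WF_s(u)$ is conic, so its complement in $T^* \rr d \setminus 0$ can be taken open and conic; after that, everything reduces to routine finite-cover manipulations on $S_{2d-1}$.
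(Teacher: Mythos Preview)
Your proposal is correct and follows essentially the same approach as the paper: both directions are handled as you describe, and for the nontrivial inclusion the paper also picks an open conic $\Gamma$ with $\overline{\Gamma}$ disjoint from $\overline{\bigcup_s WF_s(u)}$, then for each fixed $s$ covers $\overline{\Gamma \cap S_{2d-1}}$ by finitely many open cones on which $\eabs{\cdot}^s |V_\varphi u|$ is bounded, concluding that the bound holds on $\Gamma$ for every order. The only cosmetic difference is that the paper records the finite union as a set $\Gamma_s$ and writes $\Gamma \subseteq \bigcap_s \Gamma_s$, whereas you phrase it directly; the underlying compactness argument is identical.
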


\begin{proof}
The inclusion
\begin{equation*}
WF(u) \supseteq \overline{\bigcup_{s \in \ro} WF_s(u)}
\end{equation*} 
follows immediately by virtue of
$WF_s(u) \subseteq WF(u)$ for all $s \in \ro$
and $WF(u) \subseteq T^* \rr d \setminus 0$ being closed. 

To show 
\begin{equation}\label{WFsubWFs}
WF(u) \subseteq \overline{\bigcup_{s \in \ro} WF_s(u)}
\end{equation} 
we may assume that $\overline{\bigcup_{s \in \ro} WF_s(u)} \neq T^* \rr d \setminus 0$. 

Let $0 \neq z_0 \notin \overline{\bigcup_{s \in \ro} WF_s(u)}$.
There exists an open conic set $\Gamma \subseteq T^* \rr d \setminus 0$ containing $z_0$ such that 
$\overline \Gamma \cap \overline{\bigcup_{s \in \ro} WF_{s}(u)} = \emptyset$. 

Let $\fy \in \cS(\rr d) \setminus 0$. 
By definition of $WF_{s}(u)$, for every $z \in \overline \Gamma$ and every $s \in \ro$ there exists an open conical set $\Gamma_{z,s} \subseteq T^* \rr d \setminus 0$ containing $z$ such that 
\begin{equation*}
\sup_{w \in \Gamma_{z,s}} \eabs{w}^{s} | V_\varphi u (w)| < \infty.
\end{equation*}
We thus have for each $s \in \ro$
\begin{equation*}
\overline{ \Gamma \cap S_{2d-1}} \subseteq  \bigcup_{z \in \overline{\Gamma \cap S_{2d-1}}} \Gamma_{z,s}
\end{equation*}
and by the compactness of $\overline{ \Gamma \cap S_{2d-1}} \subseteq T^* \rr d \setminus 0$, the covering of open sets on the right hand side may be reduced to a finite covering of the form 
\begin{equation}\label{gammacovering}
\overline{ \Gamma \cap S_{2d-1}} \subseteq  \bigcup_{j=1}^n \Gamma_{z_j,s} := \Gamma_s, \quad s \in \ro, 
\end{equation}
where $\Gamma_s \subseteq T^*\rr d \setminus 0$ is open, conic and satisfies  
\begin{equation*}
\sup_{w \in \Gamma_{s}} \eabs{w}^{s} | V_\varphi u (w)| <  \infty, \quad s \in \ro. 
\end{equation*}
From \eqref{gammacovering} we may conclude
\begin{equation*}
\Gamma \subseteq  \bigcap_{s \in \ro} \Gamma_s
\end{equation*}
which gives
\begin{equation*}
\sup_{w \in \Gamma} \eabs{w}^{N} | V_\varphi u (w)|
\leqslant \sup_{w \in \Gamma_N} \eabs{w}^{N} | V_\varphi u (w)| < \infty, \quad N \geqslant 0. 
\end{equation*}
We may conclude that $z_0 \notin WF(u)$ which proves the inclusion \eqref{WFsubWFs}.
\end{proof}

The next result says that pseudodifferential operators with symbols in $S_{0,0}^0$ are microlocal with respect to the $s$-Gabor wave front set. 
The proof is similar to the corresponding proof for the Gabor wave front set \cite[Theorem~5.1]{Rodino1}. 

\begin{prop}\label{microlocal1}
If $s \in \ro$, $a \in S_{0,0}^0$ and $u \in \cS'(\rr d)$ then
\begin{equation*}
WF_s( a^w(x,D) u) \subseteq WF_s(u).
\end{equation*}
\end{prop}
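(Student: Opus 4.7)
The plan is to mimic the proof of microlocality for the unparametrized Gabor wave front set in \cite[Theorem~5.1]{Rodino1}, carrying the polynomial order $s$ through the argument.

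Fix $z_0 \notin WF_s(u)$ and a window $\fy \in \cS(\rr d)$ with $\|\fy\|_{L^2}=1$; by Proposition \ref{sGaborinvariance} the choice of window is immaterial. There exists an open conic set $\Gamma \ni z_0$ such that $|V_\fy u(w)| \lesssim \eabs{w}^{-s}$ for $w \in \Gamma$, while $V_\fy u$ is globally polynomially bounded of some order $M \geqslant 0$ by \cite[Theorem~11.2.3]{Grochenig1}.

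The first step is to apply the STFT reconstruction formula \eqref{STFTrecon} to $u$, substitute (weakly) into $a^w(x,D) u$, and pair against an auxiliary window $\Phi \in \cS(\rr d) \setminus 0$. This yields
\begin{equation*}
V_\Phi(a^w(x,D) u)(z) = (2 \pi)^{-d} \int_{\rr {2d}} K_a(z,w) \, V_\fy u(w) \, dw,
\end{equation*}
with Gabor-matrix kernel
\begin{equation*}
K_a(z,w) = ( a^w(x,D) \Pi(w) \fy, \Pi(z) \Phi ).
\end{equation*}

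The heart of the proof is the off-diagonal estimate
\begin{equation*}
|K_a(z,w)| \lesssim \eabs{z-w}^{-N}, \quad z,w \in \rr {2d},
\end{equation*}
for every $N \geqslant 0$, which is the standard Gabor-matrix decay for $a \in S_{0,0}^0$. It is obtained by transferring derivatives onto the Schwartz functions $\fy, \Phi$ via integration by parts in both variables of the Weyl kernel, using the translation structure of $\Pi(w), \Pi(z)$ and the uniform symbol bounds that define $S_{0,0}^0$. This is the ingredient powering the proof of \cite[Theorem~5.1]{Rodino1}, and I expect it to require the most bookkeeping: the Weyl quantization conventions and Peetre's inequality have to be combined carefully, and one has to arrange integration by parts in both the configuration and phase-space variables so that no growth in $z+w$ survives.

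Granted the kernel estimate, pick $N$ so large that $h(\zeta) := C_N \eabs{\zeta}^{-N}$ belongs to $\bigcap_{t \geqslant 0} L^1_t(\rr {2d})$. Then
\begin{equation*}
|V_\Phi(a^w(x,D) u)(z)| \lesssim \int_{\rr {2d}} h(z-w) \, |V_\fy u(w)| \, dw.
\end{equation*}
Lemma \ref{convolutioninvariance}, applied with $f = V_\fy u$ (polynomially bounded of order $M$, decaying like $\eabs{\cdot}^{-s}$ on $\Gamma$) and $g = h$, then yields
\begin{equation*}
\sup_{z \in \Gamma'} \eabs{z}^s |V_\Phi(a^w(x,D) u)(z)| < \infty
\end{equation*}
for any open subcone $\Gamma' \subseteq \Gamma$ with $\overline{\Gamma' \cap S_{2d-1}} \subseteq \Gamma$. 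Choosing such a $\Gamma'$ still containing $z_0$ gives $z_0 \notin WF_s(a^w(x,D) u)$, which establishes the claimed inclusion.
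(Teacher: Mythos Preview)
Your proposal is correct and follows essentially the same strategy as the paper: express $V_\Phi(a^w(x,D)u)$ as an integral against $V_\fy u$ with a Gabor-matrix kernel, show the kernel is dominated by a function in $\bigcap_{t\geqslant 0} L^1_t(\rr{2d})$, and then invoke Lemma~\ref{convolutioninvariance}. The only difference is in how the kernel decay is justified: you propose a direct integration-by-parts argument producing the pointwise bound $|K_a(z,w)|\lesssim\eabs{z-w}^{-N}$, whereas the paper instead invokes the identification $S_{0,0}^0=\bigcap_{N\geqslant 0} M_{v_N}^{\infty,1}$ together with \cite[Theorem~3.2]{Grochenig2} to obtain an $L^1_t$ majorant $g$ for $\sup_z|K_a(z,z-w)|$; your route is more self-contained, the paper's is a one-line citation, and either suffices for the application of Lemma~\ref{convolutioninvariance}.
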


\begin{proof}
We have (see e.g. \cite{Holst1})
\begin{equation}\label{symbolintersection}
S_{0,0}^0 = \bigcap_{N \geqslant 0} M_{v_N}^{\infty,1}
\end{equation}
where $v_N$ is the weight $v_N(x,\xi) = \eabs{\xi}^N$ for $(x,\xi) \in \rr {2d} \oplus \rr {2d}$, and $M_{v_N}^{\infty,1} = M_{v_N}^{\infty,1}(\rr {2d})$
is a weighted modulation space.
The space $M_{v_N}^{\infty,1}$
is also known as a weighted version of Sj\"ostrand's symbol class (cf. \cite{Grochenig2,Sjostrand1,Sjostrand2}).

Let $\varphi \in \cS(\rr d)$ satisfy $\| \varphi \|_{L^2}=1$.
Denoting the formal adjoint of $a^w(x,D)$ by $a^w(x,D)^*$, \eqref{STFTrecon} gives for $z \in \rr {2d}$
\begin{align*}
V_\varphi (a^w(x,D) u) (z)
& = ( a^w(x,D) u, \Pi(z) \varphi ) \\
& = ( u, a^w(x,D)^* \Pi(z) \varphi ) \\
& = (2 \pi)^{-d} \int_{\rr {2d}} V_\varphi u(w) \, ( \Pi(w) \varphi,a^w(x,D)^* \Pi(z) \varphi ) \, dw \\
& = (2 \pi)^{-d} \int_{\rr {2d}} V_\varphi u(w) \, ( a^w(x,D) \, \Pi(w) \varphi,\Pi(z) \varphi ) \, dw \\
& = (2 \pi)^{-d} \int_{\rr {2d}} V_\varphi u(z-w) \, ( a^w(x,D) \, \Pi(z-w) \varphi,\Pi(z) \varphi ) \, dw.
\end{align*}
By \eqref{symbolintersection} and \cite[Theorem~3.2]{Grochenig2}, for any $t \geqslant 0$ there exists $g_t \in L_{t}^1(\rr {2d})$ such that
\begin{align*}
\left| ( a^w(x,D) \, \Pi(z-w) \varphi,\Pi(z) \varphi ) \right| \leqslant g_t(w), \quad z, w \in \rr {2d}.
\end{align*}
With $g(w) = \sup_{z \in \rr {2d}} | ( a^w(x,D) \, \Pi(z-w) \varphi,\Pi(z) \varphi ) |$
we thus have
\begin{equation*}
g \in \bigcap_{t \geqslant 0} L_t^1(\rr {2d}),
\end{equation*}
and
\begin{align}\label{convolution1}
|V_\varphi (a^w(x,D) u) (z)|
& \lesssim |V_\varphi u| * g(z), \quad z \in \rr {2d}.
\end{align}
If $0 \neq z_0 \in T^* \rr d \setminus WF_s(u)$ then there exists an open conic set $\Gamma \subseteq T^* \rr d \setminus 0$ containing $z_0$ such that
\begin{equation*}
\sup_{z \in \Gamma} \eabs{z}^s |V_\varphi u(z)| < \infty.
\end{equation*}
By \cite[Theorem~11.2.3]{Grochenig1} we have for some $M \geqslant 0$
\begin{equation*}
|V_\varphi u (z)| \lesssim \eabs{z}^{M}, \quad z \in \rr {2d}.
\end{equation*}
It now follows from \eqref{convolution1} and Lemma \ref{convolutioninvariance}
that for any open conic set $\Gamma'$ containing $z_0$ such that $\overline{\Gamma' \cap S_{2d-1}} \subseteq \Gamma$ we have
\begin{equation*}
\sup_{z \in \Gamma'} \eabs{z}^s |V_\varphi (a^w(x,D) u) (z)| < \infty,
\end{equation*}
which proves that $z_0 \notin WF_s( a^w(x,D) u)$.
Thus we have shown
\begin{equation*}
WF_s( a^w(x,D) u) \subseteq WF_s(u).
\end{equation*}
\end{proof}

\begin{rem}
Note that Proposition \ref{microlocal1} is a refinement of \eqref{microlocal2} (cf. \cite[Theorem~5.1]{Rodino1}), in view of Proposition \ref{WFuWFsu}.
\end{rem}

Since modulation and translation are invertible operators with Weyl symbols in $S_{0,0}^0$,
the result gives the following consequence.

\begin{cor}
If $u \in \cS'(\rr d)$ and $z \in \rr {2d}$ then
\begin{equation*}
WF_s( \Pi(z) u ) = WF_s(u).
\end{equation*}
\end{cor}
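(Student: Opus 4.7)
The plan is to reduce the corollary to a double application of Proposition~\ref{microlocal1}, using that $\Pi(z)$ is invertible on $\cS'(\rr d)$ with inverse that is again a Weyl operator whose symbol lies in $S_{0,0}^0$.

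First, I would verify that the phase space translation operator $\Pi(z)=M_\xi T_x$ for $z=(x,\xi)\in\rr{2d}$ is a Weyl operator with symbol in $S_{0,0}^0$. A direct calculation shows that the translation $T_x$ has Weyl symbol $(y,\eta)\mapsto e^{-i\la x,\eta\ra}$ and the modulation $M_\xi$ has Weyl symbol $(y,\eta)\mapsto e^{i\la y,\xi\ra}$. Both symbols are smooth with all derivatives bounded on $\rr{2d}$, so they belong to $S_{0,0}^0$. Since $S_{0,0}^0$ is closed under Weyl composition, the product $\Pi(z)=M_\xi T_x$ is itself the Weyl quantization of some symbol $b_z\in S_{0,0}^0$. (Equivalently, one can note that $\Pi(z)$ coincides up to a unimodular phase with the Weyl quantization of the bounded exponential $(y,\eta)\mapsto e^{i\sigma((y,\eta),z)}$.)

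Second, I would apply Proposition~\ref{microlocal1} directly with $a=b_z$ to obtain the inclusion
\begin{equation*}
WF_s(\Pi(z)u) = WF_s(b_z^w(x,D) u) \subseteq WF_s(u).
\end{equation*}

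For the reverse inclusion, I would use invertibility. The operator $\Pi(z)$ is unitary on $L^2(\rr d)$ and a homeomorphism of $\cS'(\rr d)$, with inverse equal to $\Pi(-z)$ up to a unimodular phase factor (coming from the Weyl commutation relation $T_xM_\xi=e^{-i\la x,\xi\ra}M_\xi T_x$). Hence $\Pi(z)^{-1}$ is again a Weyl operator with symbol $b_{-z}$ in $S_{0,0}^0$ (up to a harmless constant factor of modulus one). Applying Proposition~\ref{microlocal1} to the distribution $\Pi(z)u\in\cS'(\rr d)$ and the symbol $b_{-z}$ yields
\begin{equation*}
WF_s(u) = WF_s(\Pi(z)^{-1}\Pi(z)u) \subseteq WF_s(\Pi(z)u),
\end{equation*}
and combining the two inclusions gives the claimed equality.

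The argument contains no real obstacle: the only point that requires care is the identification of the symbols of $T_x$ and $M_\xi$ and the verification that they (and their composition) lie in $S_{0,0}^0$, which is routine. Once this is in place, Proposition~\ref{microlocal1} does all the work, exactly as the parenthetical remark preceding the corollary in the excerpt already suggests.
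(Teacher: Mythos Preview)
Your proposal is correct and follows essentially the same approach as the paper, which simply remarks that modulation and translation are invertible operators with Weyl symbols in $S_{0,0}^0$ and deduces the corollary from Proposition~\ref{microlocal1}. You have merely supplied the details (the explicit symbols and the use of the inverse $\Pi(-z)$) that the paper leaves implicit.
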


Thus the examples \eqref{example1sa0} and \eqref{example1sb0} generalizes into  
\begin{equation}\label{example1sa}
WF_s(\delta_x) = \{ 0 \} \times (\rr d \setminus 0 ), \quad x \in \rr d, \quad s>0, 
\end{equation}
and
\begin{equation}\label{example1sb}
WF_s(\delta_x) = \emptyset, \quad x \in \rr d, \quad s \leqslant 0, 
\end{equation}
respectively. 

Next we show an $s$-Gabor wave front set version of the symplectic invariance \eqref{symplecticinvarianceWF}. 

\begin{lem}\label{symplecticGabors}
For each $s \in \ro$
\begin{equation}\label{symplecticinvarianceWFs}
WF_s ( \mu(\chi) u) = \chi WF_s (u), \quad \chi \in \Sp(d, \ro), \quad u \in \cS'(\rr d), 
\end{equation}
where $\mu(\chi)$ is the operator corresponding to $\chi \in \Sp(d, \ro)$ that satisfies \eqref{symplecticoperator}. 
\end{lem}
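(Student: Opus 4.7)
My plan is to reduce the lemma to the corresponding intertwining identity for the short-time Fourier transform, and then invoke window-independence of $WF_s$ (Proposition \ref{sGaborinvariance}) together with the fact that $\chi \in \Sp(d,\ro)$ is a linear bijection on $\rr{2d}$.

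\textbf{Step 1: Intertwining identity.} The metaplectic representation $\mu$ implements $\chi$ on phase-space shifts: for each $\chi \in \Sp(d,\ro)$ and $z \in \rr{2d}$ there is a unimodular constant $c(z,\chi)$ with
\begin{equation*}
\mu(\chi)^{-1} \Pi(z) \mu(\chi) = c(z,\chi) \, \Pi(\chi^{-1} z),
\end{equation*}
which follows directly from \eqref{symplecticoperator} applied to the Weyl symbols of the translations/modulations that make up $\Pi(z)$, together with the fact that $\chi$ preserves the canonical commutation relations. Fixing $\fy \in \cS(\rr d)\setminus 0$ and setting $\psi := \mu(\chi)^{-1}\fy \in \cS(\rr d)\setminus 0$ (since $\mu(\chi)$ is a homeomorphism of $\cS$), this gives
\begin{equation*}
V_\fy (\mu(\chi) u)(z) = (\mu(\chi) u, \Pi(z)\fy) = \overline{c(z,\chi)}\, (u,\Pi(\chi^{-1}z)\psi) = \overline{c(z,\chi)}\, V_\psi u(\chi^{-1} z),
\end{equation*}
so that
\begin{equation*}
|V_\fy (\mu(\chi) u)(z)| = |V_\psi u(\chi^{-1} z)|, \quad z \in \rr{2d}.
\end{equation*}

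\textbf{Step 2: Transfer of conic decay.} Since $\chi$ is an invertible linear map on $\rr{2d}$, one has $\eabs{\chi w} \asymp \eabs{w}$ for $w \in \rr{2d}$, and $\chi$ maps open conic subsets of $T^*\rr d \setminus 0$ to open conic subsets bijectively. Now suppose $z_0 \notin WF_s(\mu(\chi) u)$. By Proposition \ref{sGaborinvariance} we may use the window $\fy$, and find an open conic neighborhood $\Gamma$ of $z_0$ with
\begin{equation*}
\sup_{z \in \Gamma} \eabs{z}^s |V_\fy(\mu(\chi)u)(z)| < \infty.
\end{equation*}
Substituting the identity from Step~1 and changing variables $w = \chi^{-1} z$ yields
\begin{equation*}
\sup_{w \in \chi^{-1}\Gamma} \eabs{w}^s |V_\psi u(w)| \lesssim \sup_{w \in \chi^{-1}\Gamma} \eabs{\chi w}^s |V_\psi u(w)| < \infty,
\end{equation*}
where $\chi^{-1}\Gamma$ is an open conic neighborhood of $\chi^{-1} z_0$. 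Using Proposition \ref{sGaborinvariance} again (now with window $\psi$) gives $\chi^{-1} z_0 \notin WF_s(u)$, i.e. $z_0 \notin \chi WF_s(u)$. This proves $WF_s(\mu(\chi) u) \subseteq \chi WF_s(u)$.

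\textbf{Step 3: Reverse inclusion and conclusion.} The opposite inclusion follows by symmetry: apply Step~2 to $v := \mu(\chi) u$ and the symplectic matrix $\chi^{-1}$, noting that $\mu(\chi^{-1}) v = \pm u$ by the homomorphism-up-to-sign property of $\mu$, and that $WF_s$ is invariant under multiplication by $\pm 1$. This yields $\chi^{-1} WF_s(\mu(\chi) u) \subseteq WF_s(u)$, equivalently $\chi WF_s(u) \subseteq WF_s(\mu(\chi) u)$, completing \eqref{symplecticinvarianceWFs}.

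The only delicate point is Step~1, where one needs the intertwining of phase-space shifts with $\mu(\chi)$ up to a phase; beyond that the argument is formal, exploiting that both the $s$-decay rate $\eabs{z}^{-s}$ and open conic subsets are preserved under the linear bijection $\chi$.
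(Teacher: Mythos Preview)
Your proof is correct and rests on the same key identity as the paper: the intertwining $\mu(\chi)^{-1}\Pi(z)\mu(\chi)=c(z,\chi)\,\Pi(\chi^{-1}z)$ with $|c(z,\chi)|=1$, which the paper derives by an explicit Weyl-symbol computation (writing $a_z(y,\eta)=e^{i\la x,\xi\ra/2+i\sigma(z,(y,\eta))}$ and using $\sigma(z,\chi\cdot)=\sigma(\chi^{-1}z,\cdot)$) while you assert it via \eqref{symplecticoperator}. The only organizational difference is that the paper obtains the exact equality $|V_{\mu(\chi)\fy}(\mu(\chi)u)(\chi z)|=|V_\fy u(z)|$ and reads off \eqref{symplecticinvarianceWFs} in one stroke, whereas you pass through two inclusions, window-independence (Proposition~\ref{sGaborinvariance}), and the equivalence $\eabs{\chi w}\asymp\eabs{w}$; both routes are short and equivalent.
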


\begin{proof}
Let $\varphi \in \mathscr S(\rr d) \setminus 0$. Since $\mu(\chi) \varphi \in \cS(\rr d) \setminus 0$ it suffices by Definition \ref{wavefront2} to show
\begin{equation*}
\left| V_{ \mu(\chi) \varphi} ( \mu(\chi) u )( \chi z)\right| = \left| V_\varphi u ( z)\right|, \quad z \in \rr {2d}.
\end{equation*}

Define for $x,\xi \in \rr d$ the symbol
\begin{equation*}
a_{x,\xi} (y,\eta) = e^{i \la x,\xi \ra/2 + i (\la \xi,y \ra - \la x,\eta \ra)}, \quad y,\eta \in \rr d.
\end{equation*}
For $f,g \in \mathscr S$ we have $( a_{x,\xi}^w(x,D) f,g ) = (M_\xi T_x f,g)$, that is $a_z^w(x,D) = \Pi(z)$ for $z \in \rr {2d}$.
Note that $a_{x,\xi} \in S_{0,0}^0$.
It follows from \eqref{symplecticoperator} that
\begin{equation*}
\mu(\chi)^{-1} \, \Pi(\chi z) \, \mu (\chi) = (a_{\chi z} \circ \chi)^w(x,D).
\end{equation*}
By \cite[Proposition~4.1]{Folland1} we have, with $A,B,C,D \in \rr {d \times d}$,
\begin{equation*}
\chi =
\left(
  \begin{array}{ll}
  A & B \\
  C & D
  \end{array}
\right) \in \Sp(d,\ro)
\end{equation*}
if and only if
\begin{equation*}
A^t C = C^t A, \quad B^t D = D^t B \quad \mbox{and} \quad A^t D - C^t B = I.
\end{equation*}
With $z=(x,\xi)$ this gives 
\begin{align*}
a_{\chi z} \circ \chi(y,\eta)
& = e^{i \la A x + B \xi, C x + D \xi \ra/2 + i (\la C x + D \xi,A y + B \eta \ra - \la A x + B \xi,C y + D \eta \ra)} \\
& = e^{i( \la A x, C x \ra + \la B \xi, D \xi \ra + \la x, (A^t D+C^t B ) \xi \ra)/2 } \\
& \quad \times e^{i (\la x, (C^t B - A^t D) \eta \ra + \la \xi , (D^t A - B^t C) y \ra)} \\
& = e^{i ( \la A x, C x \ra + \la B \xi, D \xi \ra + 2 \la Cx, B \xi \ra + \la x, \xi \ra) /2} \, e^{i (\la \xi,y \ra - \la x,\eta \ra)} \\
& = e^{i ( \la A x, C x \ra + \la B \xi, D \xi \ra + 2 \la Cx, B \xi \ra) /2} a_{x,\xi}(y,\eta),
\end{align*}
and hence $(a_{\chi z} \circ \chi)^w(x,D) = e^{i ( \la A x, C x \ra + \la B \xi, D \xi \ra + 2 \la Cx, B \xi \ra ) /2} \Pi(x,\xi)$.
This gives finally for $z \in \rr {2d}$
\begin{align*}
\left| V_{\mu(\chi) \varphi} (\mu(\chi) u )( \chi z)\right|
& = \left| ( \mu(\chi) u, \Pi(\chi z) \, \mu(\chi) \varphi ) \right|
= \left| ( u, \mu(\chi) ^{-1} \Pi(\chi z) \, \mu(\chi) \varphi ) \right| \\
& = \left| ( u, \Pi(z) \varphi ) \right|
= \left| V_\varphi u (z) \right|.
\end{align*}
\end{proof}

Combining \eqref{propagatorsymplectic} and \eqref{symplecticinvarianceWFs} we obtain the following invariance result for the $s$-Gabor wave front set (cf. \eqref{realcase}). 
If $Q \in \cc {2d \times 2d}$, $\re Q = 0$ and $F = \J Q  \in \rr {2d \times 2d}$ then 
\begin{equation}\label{realcaseWFs}
WF_s(e^{- t q^w(x,D)} u_0) = e^{-2 i t F} WF_s(u_0), \quad t \in \ro, \quad u_0 \in \cS'(\rr d), \quad s \in \ro. 
\end{equation}

The symplectic invariance \eqref{symplecticinvarianceWFs} gives $s$-Gabor wave front set versions of \eqref{example1}, \eqref{example2} and \eqref{example3} as follows.  
We have \eqref{example1sa}, \eqref{example1sb}, 
\begin{equation*}
WF_s(e^{i \la \cdot,\xi \ra}) = WF_s (e^{i \la \cdot, A \, \cdot \ra/2 } ) = \emptyset, \quad \xi \in \rr d, \quad A \in \rr {d \times d} \quad \mbox{symmetric}, \quad s \leqslant 0,
\end{equation*} 
\begin{equation}\label{example2sa}
WF_s(e^{i \la \cdot,\xi \ra}) = (\rr d \setminus 0 ) \times  \{ 0 \}, \quad \xi \in \rr d, \quad s>0, 
\end{equation}
and 
\begin{equation}\label{example3sa}
WF_s (e^{i \la \cdot, A \, \cdot \ra/2 } ) = \{ (x, Ax): \, x \in \rr d \setminus 0 \}, \quad A \in \rr {d \times d} \quad \mbox{symmetric}, \quad s > 0.
\end{equation}

\section{Propagation of polynomial phase space singularities}\label{secpropagation}

\subsection{Propagation of $s$-Gabor singularities for certain linear operators}

Every continuous linear operator $\cK: \cS(\rr d) \mapsto \cS'(\rr d)$ has a Schwartz kernel $K \in \cS'(\rr {2d})$ that satisfies
\begin{equation*}
(\cK f, g) = (K, g \otimes \overline f), \quad f,g \in \cS(\rr d). 
\end{equation*}

\begin{lem}\label{STFTop}
Let $\cK: \cS(\rr d) \mapsto \cS'(\rr d)$ be a continuous linear operator with Schwartz kernel $K \in \cS'(\rr {2d})$, 
suppose $\fy \in \cS(\rr d)$ satisfies $\| \fy \|_{L^2} = 1$ and set $\Phi = \fy \otimes \fy$. 
Then for $u, \psi \in \cS(\rr d)$
\begin{equation}\label{TSTFT}
\begin{aligned}
(\cK u, \psi) 
= (2 \pi)^{-2d} \int_{\rr {4d}} V_\Phi K(x,y,\xi,-\eta) \, \overline{V_\fy \psi (x,\xi)} \, V_{\overline \fy} u(y,\eta) \, dx \, dy \, d \xi \, d \eta. 
\end{aligned}
\end{equation}
\end{lem}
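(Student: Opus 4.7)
The plan is to apply the STFT inversion formula \eqref{STFTrecon} on $\rr{2d}$ to the Schwartz kernel $K$ with window $\Phi = \fy \otimes \fy \in \cS(\rr{2d}) \setminus 0$, noting that $\|\Phi\|_{L^2} = \|\fy\|_{L^2}^2 = 1$. Since $\cK: \cS(\rr d) \to \cS'(\rr d)$ is continuous, the Schwartz kernel theorem gives
\begin{equation*}
(\cK u, \psi) = (K, \psi \otimes \overline u), \quad u,\psi \in \cS(\rr d),
\end{equation*}
and $\psi \otimes \overline u \in \cS(\rr{2d})$. I will therefore apply \eqref{STFTrecon} with $f = K$, $g = \psi \otimes \overline u$, and window $\Phi$, which yields
\begin{equation*}
(\cK u, \psi) = (2\pi)^{-2d} \int_{\rr{4d}} V_\Phi K(w) \, (\Pi(w)\Phi, \psi \otimes \overline u) \, dw,
\end{equation*}
where $w = (x,y,\xi,\eta') \in \rr{4d}$ and the inner product on $\rr{2d}$ is conjugate linear in the second slot.

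Next I will compute the factor $(\Pi(w)\Phi, \psi \otimes \overline u)$ explicitly. Since $\Phi = \fy \otimes \fy$, the phase space shift factors as $\Pi(x,y,\xi,\eta')\Phi = (M_\xi T_x \fy) \otimes (M_{\eta'} T_y \fy)$, and by Fubini the pairing splits:
\begin{equation*}
(\Pi(w)\Phi, \psi \otimes \overline u) = \Bigl( \int_{\rr d} e^{i\la s,\xi\ra} \fy(s-x) \overline{\psi(s)}\, ds \Bigr) \cdot \Bigl( \int_{\rr d} e^{i\la t,\eta'\ra} \fy(t-y) u(t) \, dt \Bigr).
\end{equation*}

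The key step is recognizing each factor as a short-time Fourier transform. For the first factor, using the definition $V_\fy \psi(x,\xi) = \int \psi(s) e^{-i\la s,\xi\ra} \overline{\fy(s-x)}\, ds$, complex conjugation gives that the first factor equals $\overline{V_\fy \psi(x,\xi)}$. For the second factor, using $V_{\overline \fy} u(y,\eta) = \int u(t) e^{-i\la t,\eta\ra} \fy(t-y)\, dt$, I see that the second factor equals $V_{\overline \fy} u(y,-\eta')$. Substituting these identifications and performing the change of variable $\eta = -\eta'$ in the outer integral (which preserves Lebesgue measure on $\rr d$) produces exactly \eqref{TSTFT}.

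I expect no serious obstacle beyond careful sign and conjugation bookkeeping; the slight asymmetry (one window being $\fy$ and the other $\overline \fy$, together with the reflection in the fourth coordinate) is dictated by the orientation conventions of $V_\Phi$ versus the tensor factor $\overline u$.
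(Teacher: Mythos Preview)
Your proof is correct and follows essentially the same route as the paper: both apply the STFT inversion formula \eqref{STFTrecon} on $\rr{2d}$ to $(K,\psi\otimes\overline u)$ with window $\Phi=\fy\otimes\fy$, then identify the inner factor $(\Pi(w)\Phi,\psi\otimes\overline u)=\overline{V_\Phi(\psi\otimes\overline u)(w)}$ with the product $\overline{V_\fy\psi(x,\xi)}\,V_{\overline\fy}u(y,-\eta')$ and change variables $\eta=-\eta'$. The paper adds one sentence justifying convergence (polynomial bound on $V_\Phi K$, Schwartz decay of $V_\fy\psi$ and $V_{\overline\fy}u$), which you may wish to include.
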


\begin{proof}
Since $M_{\xi,\eta} T_{x,y} \Phi = M_\xi T_x \fy \otimes M_\eta T_y \fy$ we have 
\begin{equation*}
\begin{aligned}
V_\Phi (\psi \otimes \overline u) (x,y,\xi,\eta) 
& = (\psi \otimes \overline u, M_\xi T_x \fy \otimes M_\eta T_y \fy) \\
& = V_\fy \psi (x,\xi) \, \overline{V_{\overline \fy} u(y,-\eta) }. 
\end{aligned}
\end{equation*}
The STFT inversion formula \eqref{STFTrecon} gives
\begin{equation*}
\begin{aligned}
(\cK u, \psi) 
&  = (K, \psi \otimes \overline u) \\
& = (2 \pi)^{-2d} \int_{\rr {4d}} V_\Phi K (x,y,\xi,-\eta) \, \overline{V_\fy \psi (x,\xi)} \, V_{\overline \fy} u(y,\eta) \, dx \, dy \, d \xi \, d \eta. 
\end{aligned}
\end{equation*}
The integral converges since $V_\Phi K$ is polynomially bounded according to \cite[Theorem~11.2.3]{Grochenig1}, and $V_\fy \psi, V_{\overline \fy} u \in \cS(\rr {2d})$. 
\end{proof}

Since
\begin{equation*}
\overline{V_\fy \Pi(t,\theta) \fy (x,\xi)} = e^{i \la x, \xi - \theta \ra} V_\fy \fy (t-x,\theta-\xi)
\end{equation*}
we obtain from Lemma \ref{STFTop} with $\psi = \Pi(t,\theta) \fy$ for $(t,\theta) \in \rr {2d}$
\begin{equation}\label{STFTop1}
\begin{aligned}
& V_\fy(\cK u) (t, \theta) 
= (\cK u, \Pi(t,\theta) \fy) \\
& = (2 \pi)^{-2d} \int_{\rr {4d}} e^{i \la x,\xi-\theta \ra} V_\Phi K (x,y,\xi,-\eta) V_\fy \fy (t-x,\theta-\xi) \, V_{\overline \fy} u(y,\eta) \, dx \, dy \, d \xi \, d \eta. 
\end{aligned}
\end{equation}
This formula will be useful in the proof of Theorem \ref{WFphaseincl}. 

In the following results we need some definitions from \cite{Hormander1}. 
For $K \in \cS'(\rr {2d})$ we define
\begin{equation}\label{WFproj}
\begin{aligned}
WF_1(K) & = \{ (x,\xi) \in T^* \rr d: \ (x, 0, \xi, 0) \in WF(K) \} & \subseteq T^* \rr d \setminus 0, \\
WF_2(K) & = \{ (y,\eta) \in T^* \rr d: \ (0, y, 0, -\eta) \in WF(K) \} & \subseteq T^* \rr d \setminus 0. 
\end{aligned}
\end{equation}

We also need the relation mapping between a subset $A \subseteq X \times Y$ of the Cartesian product of two sets $X$, $Y$, and a subset $B \subseteq Y$, 
\begin{equation*}
A \circ B = \{ x \in X: \, \exists y \in B: \, (x,y) \in A \} \subseteq X.  
\end{equation*}
Finally we need later the reflection operator in the fourth $\rr d$ coordinate on $\rr {4d}$
\begin{equation}\label{twist}
(x,y,\xi,\eta)' = (x,y,\xi,-\eta), \quad x,y,\xi,\eta \in \rr d. 
\end{equation}

The following lemma concerns the relation mapping between closed conic sets, the first of which does not intersect the coordinate axes. 

\begin{lem}\label{konlemma1}
Let $G \subseteq \rr {2d} \setminus 0$ and $G_1 \subseteq \rr d \setminus 0$ be closed conic sets such that 
\begin{equation}\label{assumpconelem}
G \cap \left( \{0\} \times \rr d \cup \rr d \times \{0\}  \right) = \emptyset. 
\end{equation}
Suppose $x_0 \in \rr d \setminus 0$
satisfies $x_0 \notin G \circ G_1$. 
Then there exist open conic sets $\Gamma_0, \Gamma_1 \subseteq \rr d \setminus 0$ such that $x_0 \in \Gamma_0$, $G_1 \subseteq \Gamma_1$ and 
$\overline{\Gamma}_0 \cap (G \circ \overline{\Gamma}_1) = \emptyset$.
\end{lem}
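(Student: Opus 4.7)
The plan is to argue by contradiction using a compactness/rescaling argument on unit spheres, with the no-axis assumption \eqref{assumpconelem} entering precisely to prevent degenerate limits. First, I would dispose of the trivial case $G_1 = \emptyset$ (then $G \circ G_1 = \emptyset$, so take $\Gamma_1 = \emptyset$ and any conic neighborhood $\Gamma_0$ of $x_0$). Otherwise, define the shrinking conic neighborhoods
\begin{equation*}
\Gamma_0^{(n)} = \bigl\{x \in \rr d \setminus 0 : \bigl|x/|x| - x_0/|x_0|\bigr| < 1/n \bigr\},
\quad
\Gamma_1^{(n)} = \bigl\{y \in \rr d \setminus 0 : d(y/|y|, G_1 \cap S_{d-1}) < 1/n \bigr\},
\end{equation*}
each open and conic, with $x_0 \in \Gamma_0^{(n)}$ and $G_1 \subseteq \Gamma_1^{(n)}$. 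My claim is that $\Gamma_0 = \Gamma_0^{(n)}$ and $\Gamma_1 = \Gamma_1^{(n)}$ satisfy the conclusion for $n$ large enough.

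Suppose not: then for every $n$ there exist $\tilde x_n \in \overline{\Gamma_0^{(n)}}$ and $\tilde y_n \in \overline{\Gamma_1^{(n)}}$ with $(\tilde x_n, \tilde y_n) \in G$. Since $G$, $\Gamma_0^{(n)}$ and $\Gamma_1^{(n)}$ are all conic, rescaling $(\tilde x_n, \tilde y_n)$ by a positive factor preserves membership in all relevant sets, so I may assume $|(\tilde x_n, \tilde y_n)| = 1$. Passing to a subsequence via compactness of $S_{2d-1}$, I get $(\tilde x_n, \tilde y_n) \to (x^*, y^*)$ with $|(x^*, y^*)| = 1$. Closedness of $G$ gives $(x^*, y^*) \in G$.

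Now the critical step: the hypothesis \eqref{assumpconelem} forces both $x^* \neq 0$ and $y^* \neq 0$, since $G$ avoids $\{0\} \times \rr d$ and $\rr d \times \{0\}$. Consequently $|\tilde x_n|$ and $|\tilde y_n|$ are eventually bounded away from zero, so $\tilde x_n/|\tilde x_n| \to x^*/|x^*|$ and $\tilde y_n/|\tilde y_n| \to y^*/|y^*|$. Because $\overline{\Gamma_0^{(n)}}$ shrinks on the unit sphere to $\{x_0/|x_0|\}$, we obtain $x^*/|x^*| = x_0/|x_0|$, hence $x^* = t\, x_0$ for some $t > 0$. Because $\overline{\Gamma_1^{(n)}} \cap S_{d-1}$ shrinks to the closed set $G_1 \cap S_{d-1}$, we obtain $y^*/|y^*| \in G_1 \cap S_{d-1}$, hence $y^* \in G_1$ by conicity. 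Conicity of $G$ then gives $(x_0, y^*/t) \in G$ with $y^*/t \in G_1$, so $x_0 \in G \circ G_1$, contradicting the hypothesis.

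The main obstacle is exactly the handling of the axis-avoidance: without \eqref{assumpconelem} the limit point $(x^*, y^*) \in G$ could have $x^* = 0$ or $y^* = 0$, and the rescaling that identifies $x^*$ with a positive multiple of $x_0$ and $y^*$ with an element of $G_1$ would break down. The hypothesis is used for this single, decisive step; everything else is compactness and conicity bookkeeping.
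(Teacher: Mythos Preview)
Your proof is correct and follows essentially the same route as the paper's: define nested conic neighborhoods of $x_0$ and of $G_1$ via distances on the unit sphere, argue by contradiction, normalize the offending pair $(\tilde x_n,\tilde y_n)$ to $S_{2d-1}$, extract a convergent subsequence with limit in $G$, invoke \eqref{assumpconelem} to force both components of the limit to be nonzero, and then identify the limit as lying on $\ro_+ x_0 \times G_1$. The only cosmetic differences are that you treat the case $G_1=\emptyset$ explicitly (the paper's construction handles it implicitly since then $\Gamma_{1,\ep}=\emptyset$) and that you spell out the continuity step $\tilde x_n/|\tilde x_n|\to x^*/|x^*|$ before using the shrinking of the cones.
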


\begin{proof}
The assumption $x_0 \notin G \circ G_1$ means 
\begin{equation}\label{asskonlem}
(\{ x_0 \} \times G_1) \cap G = \emptyset. 
\end{equation}
Let $0 < \ep < 1$ and define the open conic sets
\begin{align*}
\Gamma_{0,\ep} & = \left\{ x \in \rr d \setminus 0 : \,  \frac{x}{|x|} \in \frac{x_0}{|x_0|} + B_\ep(\rr d) \right\} \subseteq \rr d \setminus 0, \\
\Gamma_{1,\ep} & = \left\{ y \in \rr d \setminus 0 : \,  \frac{y}{|y|} \in G_1 + B_\ep(\rr d) \right\} \subseteq \rr d \setminus 0, 
\end{align*}
whose closures in $\rr d \setminus 0$ are, respectively,
\begin{align*}
\overline{\Gamma}_{0,\ep} & = \left\{ x \in \rr d \setminus 0 : \,  \frac{x}{|x|} \in \frac{x_0}{|x_0|} + \overline{B_\ep(\rr d)} \right\} \subseteq \rr d \setminus 0, \\
\overline{\Gamma}_{1,\ep} & = \left\{ y \in \rr d \setminus 0 : \,  \frac{y}{|y|} \in G_1 + \overline{B_\ep(\rr d)} \right\} \subseteq \rr d \setminus 0. 
\end{align*}
Obviously $x_0 \in \Gamma_{0,\ep}$ and $G_1 \subseteq \Gamma_{1,\ep}$ for any $\ep>0$. 

We are going to show that 
\begin{equation}\label{conclkonlem}
( \overline{\Gamma}_{0,\ep} \times \overline{\Gamma}_{1,\ep}) \cap G = \emptyset
\end{equation}
holds for some $\ep>0$, which proves the lemma since it is equivalent to $\overline{\Gamma}_{0,\ep} \cap (G \circ \overline{\Gamma}_{1,\ep}) = \emptyset$. 

To prove \eqref{conclkonlem} suppose for a contradiction that 
\begin{equation}\label{conecontradiction}
( \overline{\Gamma}_{0,1/n} \times \overline{\Gamma}_{1,1/n}) \cap G \neq \emptyset, \quad n \in \no. 
\end{equation}
Then there exists 
$(x_n,y_n) \in ( \overline{\Gamma}_{0,1/n} \times \overline{\Gamma}_{1,1/n}) \cap G$  
for all $n \in \no$. 
Since all involved sets are conic we may assume
\begin{equation*}
(x_n,y_n) \in ( \overline{\Gamma}_{0,1/n} \times \overline{\Gamma}_{1,1/n}) \cap G  \cap S_{2d-1}, \quad n \in \no.
\end{equation*}
Passing to a subsequence without change of notation gives convergence
\begin{equation*}
(x_n,y_n) \longrightarrow (x,y) \in 
G \cap S_{2d-1}, \quad n \rightarrow \infty,
\end{equation*}
and thanks to the assumption \eqref{assumpconelem} we must have $x \neq 0$ and $y \neq 0$. 

Since $x_n \in \overline{\Gamma}_{0,1/n}$ we have 
\begin{equation*}
 \frac{x_n}{|x_n|} =  \frac{x_0}{|x_0|} + w_n
\end{equation*}
where $|w_n| \leqslant 1/n$, so it follows that $x \in \ro_+ x_0 = \{t x_0: \, t>0 \}$. 
Likewise, since $y_n \in \overline{\Gamma}_{1,1/n}$ we have 
\begin{equation*}
 \frac{y_n}{|y_n|} =  u_n + w_n
\end{equation*}
where $u_n \in G_1$ and $|w_n| \leqslant 1/n$. 
It follows from the closure of $G_1$ in $\rr d \setminus 0$ and its cone property that $y \in G_1$. 

We have thus deduced
\begin{equation*}
(x,y) \in (\ro_+ x_0 \times G_1) \cap G 
\end{equation*}
which contradicts \eqref{asskonlem}. 
It follows that our assumption \eqref{conecontradiction} must be false so \eqref{conclkonlem} indeed holds for some $\ep>0$. 
\end{proof}

For $K \in \cS'(\rr {2d})$ and $\Phi \in \cS(\rr {2d}) \setminus 0$ we have by \cite[Theorem~11.2.3]{Grochenig1} (cf. Section \ref{prelim}) 
for some $m \in \ro$ that does not depend on $\Phi$, 
\begin{equation}\label{kernelSTFT}
|V_\Phi K(x,y,\xi,\eta)| \lesssim \eabs{(x,y,\xi,\eta)}^m, \quad (x,y,\xi,\eta) \in \rr {4d}. 
\end{equation}

The preceding two lemmas are needed in the following result on propagation of singularities.
It is an $s$-Gabor wave front set version of H\"ormander's result \cite[Proposition~2.11]{Hormander1} which treats the Gabor wave front set. 
More precisely H\"ormander's result concerns a continuous linear operator $\cK : \cS(\rr d) \mapsto \cS'(\rr d)$ with Schwartz kernel $K \in \cS'(\rr {2d})$. 
The proposition says that the domain of $\cK $ can be extended to all $u \in \cS'(\rr d)$ such that 
\begin{equation*}
WF (u) \cap WF_2(K) = \emptyset,
\end{equation*}
in which case $\cK u \in \cS'(\rr d)$, 
and the Gabor wave front set inclusion
\begin{equation*}
WF (\cK u) \subseteq WF(K)' \circ WF (u) \cup WF_1(K)
\end{equation*}
holds. 
Note that $WF(K)' \circ WF (u)$ here means 
\begin{align*}
& WF(K)' \circ WF (u) \\
& = \{ (x,\xi) \in T^* \rr d: \,  \exists (y,\eta) \in WF (u) : \, (x,y,\xi,-\eta) \in WF(K) \}, 
\end{align*}
that is, the second and third $\rr d$ coordinates of $WF(K)$ are permuted (see \cite{Hormander1,Hormander2,Rodino2}).

\begin{thm}\label{WFphaseincl}
Suppose $\cK : \cS(\rr d) \mapsto \cS(\rr d)$ is a continuous linear operator that extends uniquely to a continuous linear operator $\cK : \cS' (\rr d) \mapsto \cS' (\rr d)$.
Suppose the Schwartz kernel $K \in \cS'(\rr {2d})$ of $\cK $ satisfies \eqref{kernelSTFT} for $m \in \ro$, and 
\begin{equation}\label{WKjempty}
WF_1(K) = WF_2(K) = \emptyset.
\end{equation}  
Then for $s,r \in \ro$ such that 
\begin{equation}\label{parameterineq}
r < s - m - 4d
\end{equation}
and $u \in \cS'(\rr d)$ we have
\begin{equation}\label{WFphaseinclusion}
WF_r (\cK u) \subseteq WF(K)' \circ WF_s (u).  
\end{equation}
\end{thm}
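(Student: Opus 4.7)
\emph{Setup via Lemma \ref{konlemma1}.} I plan to argue by contraposition: fix $(t_0,\theta_0) \in T^*\rr d \setminus 0$ with $(t_0,\theta_0) \notin WF(K)' \circ WF_s(u)$ and exhibit an open conic neighborhood of $(t_0,\theta_0)$ on which $V_\fy(\cK u)$ decays at rate $\eabs{\cdot}^{-r}$. The hypothesis \eqref{WKjempty} says precisely that the closed cone $WF(K)' \subseteq \rr{4d}$ avoids both coordinate planes, which is exactly the condition needed for Lemma \ref{konlemma1} with dimension $2d$ in place of $d$. Applied with $G = WF(K)'$ and $G_1 = WF_s(u) \cup \{0\}$, the lemma produces open conic sets $\Gamma_0 \ni (t_0,\theta_0)$ and $\Gamma_1 \supseteq WF_s(u)$ in $T^* \rr d \setminus 0$ with $\overline{\Gamma_0} \cap (WF(K)' \circ \overline{\Gamma_1}) = \emptyset$, that is, $(x,y,\xi,-\eta) \notin WF(K)$ whenever $(x,\xi) \in \overline{\Gamma_0}$ and $(y,\eta) \in \overline{\Gamma_1}$. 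A compactness argument on $(\overline{\Gamma_0} \times \overline{\Gamma_1}) \cap S_{4d-1}$ combined with the microlocal characterization \eqref{WFchar} of $WF(K)$ upgrades this to the quantitative rapid decay
\[
|V_\Phi K(x,y,\xi,-\eta)| \lesssim_N \eabs{(x,y,\xi,\eta)}^{-N}, \qquad (x,\xi) \in \Gamma_0, \ (y,\eta) \in \Gamma_1, \ N \geqslant 0,
\]
possibly after a harmless enlargement of $\Gamma_0,\Gamma_1$ beyond the closed cones. A parallel compactness argument on $(\rr{2d} \setminus \Gamma_1') \cap S_{2d-1}$, for a strict subcone $\overline{\Gamma_1'} \subseteq \Gamma_1$, yields $|V_{\overline\fy} u(y,\eta)| \lesssim \eabs{(y,\eta)}^{-s}$ on $\rr{2d} \setminus \Gamma_1'$ directly from the definition of $WF_s(u)$.

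\emph{Integral decomposition.} Starting from formula \eqref{STFTop1}, I partition the $(x,y,\xi,\eta)$-integration into three regions: (I) $(x,\xi) \in \Gamma_0$, $(y,\eta) \in \Gamma_1$; (II) $(x,\xi) \in \Gamma_0$, $(y,\eta) \notin \Gamma_1$; (III) $(x,\xi) \notin \Gamma_0$. For $(t,\theta)$ in an interior subcone with $\overline{\Gamma_0''} \subseteq \Gamma_0$ and $|(t,\theta)|$ large, region (I) is immediately dominated by the rapid decay of $V_\Phi K$, against which the polynomial bounds on $V_{\overline\fy} u$ and the Schwartz decay of $V_\fy\fy$ give arbitrary decay in $(t,\theta)$. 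Region (III) is handled via the angular separation of $(x,\xi) \notin \Gamma_0$ from $(t,\theta) \in \Gamma_0''$: this separation forces $|(t-x,\theta-\xi)| \gtrsim |(x,\xi)| + |(t,\theta)|$, so the Schwartz decay of $V_\fy\fy(t-x,\theta-\xi)$ converts into rapid decay in both $(x,\xi)$ and $(t,\theta)$, comfortably absorbing the polynomial bounds on the other factors.

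\emph{The delicate region and power counting.} Region (II) is the heart of the argument, since there the only available kernel bound is the polynomial one \eqref{kernelSTFT}, to be combined with the $\eabs{(y,\eta)}^{-s}$ decay of $V_{\overline\fy} u$ and the Schwartz decay of $V_\fy\fy$. I plan to insert the factor $\eabs{(t,\theta)}^{-r}$ into the integrand using Peetre's inequality $\eabs{(t,\theta)}^a \lesssim \eabs{(x,\xi)}^a \eabs{(t-x,\theta-\xi)}^{|a|}$, letting the Schwartz tail of $V_\fy\fy$ absorb the residual weight $\eabs{(t-x,\theta-\xi)}^{|r|}$ provided its decay order is chosen sufficiently large. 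After using the standard split $\eabs{(x,y,\xi,\eta)}^m \lesssim \eabs{(x,\xi)}^{m} \eabs{(y,\eta)}^{m}$, the weighted integrand takes the form $\eabs{(x,\xi)}^{\cdots} \eabs{(y,\eta)}^{m-s} \eabs{(t-x,\theta-\xi)}^{\cdots}$, whose $(y,\eta)$-integrability requires a loss of $2d$, and whose convolution-type $(x,\xi)$-integration against the Schwartz weight contributes another $2d$, together with the $m$ from the kernel bound. This is exactly the bookkeeping that produces the sharp condition $r < s - m - 4d$. I expect the main technical obstacle to lie precisely in this orchestration: one must calibrate Peetre's inequality and the Schwartz decay order so that both integrals converge while the full gain $\eabs{(t,\theta)}^{-r}$ is retained, a more quantitative computation than in the Gabor analogue \cite[Proposition~2.11]{Hormander1}.
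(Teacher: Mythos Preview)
Your plan has a genuine gap in Region~II. The power counting you sketch does not close: after Peetre's inequality and your ``standard split'' $\eabs{(x,y,\xi,\eta)}^m \lesssim \eabs{(x,\xi)}^m \eabs{(y,\eta)}^m$, the $(x,\xi)$-factor in the integrand is $\eabs{(x,\xi)}^{m+r}$, while the $(y,\eta)$-factor is $\eabs{(y,\eta)}^{m-s}$. These variables are decoupled. The $(y,\eta)$-integral is fine when $s>m+2d$, but the remaining $(x,\xi)$-integral is a convolution $\int \eabs{(x,\xi)}^{m+r}\,\eabs{(t-x,\theta-\xi)}^{-N}\,dx\,d\xi$, which for $m+r>0$ behaves like $\eabs{(t,\theta)}^{m+r}$, not like a constant. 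No choice of Schwartz order $N$ repairs this, and the condition you obtain is the much stronger $m+r\leqslant 0$, not $r<s-m-4d$. The underlying reason is that in Region~II you allow $(y,\eta)$ to be arbitrarily small while $(x,\xi)$ is large, and there the $\eabs{(y,\eta)}^{-s}$ decay of $V_{\overline\fy}u$ buys nothing.

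What the paper does that you are missing: before any case split it uses the assumption $WF_1(K)=WF_2(K)=\emptyset$ to confine $WF(K)$ to a \emph{comparability cone} $\Gamma_1=\{c^{-1}|(x,\xi)|<|(y,\eta)|<c|(x,\xi)|\}$ for some $c>1$ (this is \cite[Lemma~5.1]{Carypis1}). Outside $\Gamma_1$ the kernel STFT is rapidly decreasing, which handles the degenerate directions where one of $(x,\xi)$, $(y,\eta)$ is small. Inside $\Gamma_1$ one has $\eabs{(x,\xi)}\asymp\eabs{(y,\eta)}\asymp\eabs{(x,y,\xi,\eta)}$, so the offending factor $\eabs{(x,\xi)}^{r}\eabs{(x,y,\xi,\eta)}^m$ can be rewritten as $\eabs{(x,y,\xi,\eta)}^{-4d-\ep}\,\eabs{(y,\eta)}^{r+m+4d+\ep}$ and paired against $|V_{\overline\fy}u(y,\eta)|$; boundedness then requires exactly $r+m+4d+\ep\leqslant s$, i.e.\ $r<s-m-4d$, and $\eabs{(x,y,\xi,\eta)}^{-4d-\ep}$ is integrable over $\rr{4d}$. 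Your decomposition can be rescued by further splitting Region~II according to whether $(x,y,\xi,-\eta)\in\Gamma_1$, but once you do that you have essentially reproduced the paper's argument. A secondary point: the identity \eqref{STFTop1} is stated for $u\in\cS(\rr d)$, and the paper spends a paragraph extending it to $u\in\cS'(\rr d)$ via the unique extension of $\cK$; you should not take this for granted.
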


\begin{proof}
By \cite[Lemma~5.1]{Carypis1} we have by the assumption \eqref{WKjempty}
\begin{equation*}
WF(K) \subseteq \Gamma_1 = \{(x,y,\xi,\eta) \in T^* \rr {2d}: \ c^{-1} |(x,\xi)| <  |(y,\eta)| < c |(x,\xi)| \} 
\end{equation*}
for some $c > 1$. 
Defining 
\begin{align*}
\Gamma_{1,3} & = \{(x,y,\xi,\eta) \in T^* \rr {2d}: \ c |(x,\xi)| \leqslant |(y,\eta)| \}, \\
\Gamma_{2,4} & = \{(x,y,\xi,\eta) \in T^* \rr {2d}: \ c |(y,\eta)| \leqslant  |(x,\xi)| \}, \\
\end{align*}
we thus have
\begin{equation}\label{inclusionG1}
\Gamma_1 \subseteq \rr {4d} \setminus (\Gamma_{1,3} \cup \Gamma_{2,4} ). 
\end{equation}

After these preparations we first show that the formula \eqref{TSTFT} extends to $u \in \cS' (\rr d)$ and $\psi \in \cS (\rr d)$ under the given assumptions. 

By \cite[Corollary~11.2.6]{Grochenig1} 
the topology for $\cS (\rr d)$ can be defined by the collection of seminorms
\begin{equation}\label{seminorm}
\cS(\rr d) \ni \psi \mapsto \sup_{z \in \rr {2d}} \eabs{z}^n |V_\fy \psi (z)|, \quad n \geqslant 0,
\end{equation}
for any $\fy \in \cS(\rr d) \setminus 0$. 
Pick $\fy \in \cS(\rr d)$ such that $\| \fy \|_{L^2} = 1$ and set $\Phi = \fy \otimes \fy$. 
By Lemma \ref{STFTop} we have for $\psi,u \in \cS(\rr d)$
\begin{equation}\label{seminormest0}
|(\cK u,\psi)| 
\lesssim \int_{\rr {4d}} |V_\Phi K(x,y,\xi,-\eta)| \,  |V_\fy \psi (x,\xi)| \, |V_{\overline \fy} u(y,\eta)| \, dx \, dy \, d \xi \, d \eta. 
\end{equation}
We first show that the right hand side integral can be estimated by a seminorm \eqref{seminorm} of $\psi$ when $u \in \cS'(\rr d)$. 
By the assumed uniqueness of the extension $\cK : \cS' (\rr d) \mapsto \cS' (\rr d)$ this implies that 
formula \eqref{TSTFT} holds for $u \in \cS' (\rr d)$ and $\psi \in \cS (\rr d)$.

Consider first the right hand side integral over $(x,y,\xi,-\eta) \in \rr {4d} \setminus \Gamma_1$, where we have 
\begin{equation}\label{WFcompl}
|V_\Phi K(x,y,\xi,-\eta)| \lesssim \eabs{(x,y,\xi,\eta)}^{-k}, \quad k \in \no, \quad (x,y,\xi,-\eta) \in \rr {4d} \setminus \Gamma_1, 
\end{equation}
on account of $WF(K) \subseteq \Gamma_1$ and $\Gamma_1 \subseteq \rr {4d} \setminus 0$ being open. 
By \cite[Theorem~11.2.3]{Grochenig1} we have for some $L \geqslant 0$
\begin{equation}\label{STFTu}
|V_{\overline \fy} u(y,\eta)| \lesssim \eabs{(y,\eta)}^{L}, \quad (y,\eta) \in \rr {2d}, 
\end{equation}
since $u \in \cS'(\rr d)$.  

We have for any $n \in \no$
\begin{equation}\label{seminormest1}
\begin{aligned}
& \int_{\rr {4d} \setminus \Gamma_1'} 
|V_\Phi K(x,y,\xi,-\eta)| \,  |V_\fy \psi (x,\xi)| \, |V_{\overline \fy} u(y,\eta)| \, dx \, dy \, d \xi \, d \eta \\
& \lesssim 
\int_{\rr {4d} \setminus \Gamma_1'} 
\eabs{(x,y,\xi,\eta)}^{-k} \,  \eabs{(x,\xi)}^n |V_\fy \psi (x,\xi)| \, \eabs{(y,\eta)}^{L} \, dx \, dy \, d \xi \, d \eta \\
& \lesssim \sup_{z \in \rr {2d}} \eabs{z}^n |V_\fy \psi (z)| 
\int_{\rr {4d}}  \eabs{(x,y,\xi,\eta)}^{L-k} \, dx \, dy \, d \xi \, d \eta \\
& \lesssim \sup_{z \in \rr {2d}} \eabs{z}^n |V_\fy \psi (z)|
\end{aligned}
\end{equation}
provided $k > 0$ is sufficiently large. 

Next we consider the right hand side integral \eqref{seminormest0} over $(x,y,\xi,-\eta) \in \Gamma_1$. 
If $(x,y,\xi,-\eta) \in \Gamma_1$ we have $\eabs{(x,\xi)} \asymp \eabs{(y,\eta)}$ 
which gives
\begin{equation}\label{seminormest2}
\begin{aligned}
& \int_{\Gamma_1'} 
|V_\Phi K(x,y,\xi,-\eta)| \,  |V_\fy \psi (x,\xi)| \, |V_{\overline \fy} u(y,\eta)| \, dx \, dy \, d \xi \, d \eta \\
& \lesssim 
\int_{\Gamma_1'} 
\eabs{(x,y,\xi,\eta)}^{m+4d+1-4d-1} \, |V_\fy \psi (x,\xi)| \, \eabs{(y,\eta)}^{L} \, dx \, dy \, d \xi \, d \eta \\
& \lesssim 
\int_{\Gamma_1'} 
\eabs{(x,y,\xi,\eta)}^{-4d-1} \, \eabs{(x,\xi)}^{|m|+4d+1+L} \, |V_\fy \psi (x,\xi)| \, dx \, dy \, d \xi \, d \eta \\
& \lesssim \sup_{z \in \rr {2d}} \eabs{z}^{|m| + 4 d + 1 + L} |V_\fy \psi (z)|. 
\end{aligned}
\end{equation}
The estimates \eqref{seminormest1} and \eqref{seminormest2} prove our claim that  
the right hand side of \eqref{seminormest0} can be estimated by a seminorm \eqref{seminorm} of $\psi$ when $u \in \cS'(\rr d)$. 
Thus \eqref{TSTFT}, and therefore also \eqref{STFTop1}, hold for $u \in \cS' (\rr d)$ and $\psi \in \cS (\rr d)$.

Using \eqref{STFTop1} we show the inclusion \eqref{WFphaseinclusion} under assumption \eqref{parameterineq} by showing that 
\begin{equation}\label{assumption1}
0 \neq (t_0,\theta_0) \notin WF(K)' \circ WF_s (u) 
\end{equation}
implies $(t_0,\theta_0) \notin WF_r (\cK u)$.
Thus we suppose \eqref{assumption1}. 
By Lemma \ref{konlemma1} we may assume that $(t_0,\theta_0) \in \Omega_0$ and $\overline \Omega_0 \cap (WF(K)' \circ \overline \Omega_2) = \emptyset$
where $\Omega_0, \Omega_2 \subseteq T^* \rr d \setminus 0$ are conic, open and  $WF_s (u) \subseteq \Omega_2$. 
(Note that the assumption \eqref{assumpconelem} of Lemma \ref{konlemma1} corresponds to the assumption \eqref{WKjempty}.)

Denote by 
\begin{align*}
p_{1,3}(x,y,\xi,\eta) & = (x,\xi), \\
p_{2,-4}(x,y,\xi,\eta) & = (y,-\eta), \quad x,y,\xi, \eta \in \rr d, 
\end{align*}
the projections $\rr {4d} \mapsto \rr {2d}$ onto the first and the third $\rr d$ coordinate, 
and onto the second and the fourth $\rr d$ coordinate with a change of sign in the latter, respectively. 

With these notations we may express $\overline \Omega_0 \cap (WF(K)' \circ \overline \Omega_2) = \emptyset$ as
\begin{equation*}
\overline \Omega_0 \cap p_{1,3} \left( WF(K) \cap p_{2,-4}^{-1} \, \overline \Omega_2 \right) = \emptyset, 
\end{equation*}
or, equivalently, 
\begin{equation*}
p_{1,3}^{-1} \, \overline{\Omega}_0 \cap WF(K) \cap p_{2,-4}^{-1} \, \overline \Omega_2 = \emptyset.  
\end{equation*}
Due to assumption \eqref{WKjempty} we may strengthen this into 
\begin{equation*}
p_{1,3}^{-1} \, (\overline{\Omega}_0 \cup \{ 0 \} ) \setminus 0 \cap WF(K) \cap p_{2,-4}^{-1} \, (\overline \Omega_2 \cup \{ 0 \} ) \setminus 0 = \emptyset.  
\end{equation*}

Since $p_{1,3}^{-1} \, (\overline{\Omega}_0 \cup \{ 0 \} ) \setminus 0$ and $p_{2,-4}^{-1} \, (\overline \Omega_2 \cup \{ 0 \} ) \setminus 0$ are closed conic subsets of $\rr {4d} \setminus 0$, 
decreasing $\Gamma_1 \subseteq \rr {4d} \setminus 0$ if necessary
there exist open conic subsets $\Gamma_0, \Gamma_2 \subseteq \rr {4d} \setminus 0$ such that 
\begin{equation*}
WF(K) \subseteq \Gamma_1, \qquad
p_{1,3}^{-1} \, \overline{\Omega}_0 \subseteq \Gamma_0, \qquad
p_{2,-4}^{-1} \, \overline{\Omega}_2 \subseteq \Gamma_2, 
\end{equation*}
and
\begin{equation}\label{intersection1}
\Gamma_0 \cap \Gamma_1 \cap \Gamma_2 = \emptyset. 
\end{equation}

Let $\Sigma_0 \subseteq T^* \rr d \setminus 0$ be an open conic set such that 
$(t_0,\theta_0) \in \Sigma_0$ and $\overline{\Sigma_0 \cap S_{2d-1}} \subseteq \Omega_0$. 
Suppose $\fy \in \cS(\rr d)$, $\| \fy \|_{L^2} = 1$ and $\Phi = \fy \otimes \fy$.
From \eqref{STFTop1}  we have
\begin{equation}\label{estimand1}
\begin{aligned}
& \eabs{(t,\theta)}^{r}  |V_\fy(\cK u) (t, \theta)| \\
& \lesssim
\int_{\rr {4d}} |V_\Phi K(x,y,\xi,-\eta)| \, \eabs{(t,\theta)}^{r} \, | V_\fy \fy (t-x,\theta-\xi)| 
\, |V_{\overline \fy} u(y,\eta)| \, dx \, dy \, d \xi \, d \eta. 
\end{aligned}
\end{equation}
We will show that this integral is bounded when $(t,\theta) \in \Sigma_0$ which proves that $(t_0,\theta_0) \notin WF_r (\cK u)$. 

Consider first the right hand side integral over $(x,y,\xi,-\eta) \in \rr {4d} \setminus \Gamma_1$. 
From \eqref{WFcompl}, \eqref{STFTu} and $\eabs{(t,\theta)} \lesssim \eabs{(x,\xi)} \eabs{(t-x,\theta-\xi)}$ we obtain, since $V_\fy \fy \in \cS(\rr {2d})$, 
\begin{equation}\label{estimateA}
\begin{aligned}
& \int_{\rr {4d} \setminus \Gamma_1'} |V_\Phi K(x,y,\xi,-\eta)| \, \eabs{(t,\theta)}^{r}  \, | V_\fy \fy (t-x,\theta-\xi)| 
\, |V_{\overline \fy} u(y,\eta)| \, dx \, dy \, d \xi \, d \eta \\
& \lesssim \int_{\rr {4d} \setminus \Gamma_1'} \eabs{(x,y,\xi,\eta)}^{-k} \eabs{(x,\xi)}^{r} \eabs{(y,\eta)}^{L} \, dx \, dy \, d \xi \, d \eta \\
& \lesssim \int_{\rr {4d}} \eabs{(x,y,\xi,\eta)}^{|r| + L - k} \, dx \, dy \, d \xi \, d \eta
< \infty
\end{aligned}
\end{equation}
if $k > 0$ is chosen sufficiently large. The estimate holds for all $(t,\theta) \in \rr {2d}$.  

It remains to estimate the right hand side integral \eqref{estimand1} over $(x,y,\xi,-\eta) \in \Gamma_1$ 
where $\eabs{(x,\xi)} \asymp \eabs{(y,\eta)}$.  
By \eqref{inclusionG1} and \eqref{intersection1} we have $\Gamma_1 \subseteq G_1 \cup G_2$ where
\begin{equation*}
G_1 = \rr {4d} \setminus (\Gamma_{1,3} \cup \Gamma_{2,4} \cup \Gamma_0), \quad G_2 = \rr {4d} \setminus (\Gamma_{1,3} \cup \Gamma_{2,4} \cup \Gamma_2 ). 
\end{equation*}
First we assume $(x,y,\xi,-\eta) \in G_1$. Then $(x,y,\xi,-\eta) \notin \Gamma_0$ which implies $(x,\xi) \notin \Omega_0$. 
There exists $\delta>0$ such that
\begin{equation*}
|(x,\xi) - (t,\theta)| \geqslant \delta |(x,\xi)|, \quad (x,\xi) \notin \Omega_0, \quad (t,\theta) \in \Sigma_0. 
\end{equation*}

For $(t,\theta) \in \Sigma_0$ we obtain with the aid of \eqref{kernelSTFT}, for arbitrary $k \geqslant |r|$, since $V_\fy \fy \in \cS(\rr {2d})$, 
using $\eabs{(x,\xi)} \asymp \eabs{(y,\eta)}$ and \eqref{STFTu}, 
\begin{equation}\label{estimateB}
\begin{aligned}
& \int_{G_1'} |V_\Phi K(x,y,\xi,-\eta)| \, \eabs{(t,\theta)}^{r} \,  | V_\fy \fy (t-x,\theta-\xi)| \, 
|V_{\overline \fy} u(y,\eta)| \, dx \, dy \, d \xi \, d \eta \\
& \lesssim \int_{G_1'} \eabs{(x,y,\xi,\eta)}^{m} \, \eabs{(x,\xi)}^{r} \,  
\eabs{(t-x,\theta-\xi)}^{|r|-k} \, \eabs{(t-x,\theta-\xi)}^{-k} \\
& \qquad \qquad \qquad \qquad \qquad \qquad \qquad \qquad \qquad \qquad \times \eabs{(y,\eta)}^L \, dx \, dy \, d \xi \, d \eta \\
& \lesssim \int_{G_1'} \eabs{(x,\xi)}^{|m|+r + 4 d + 2 + L -k} \, \eabs{(x,\xi)}^{-2d-1} \eabs{(y,\eta)}^{-2d-1} \, dx \, dy \, d \xi \, d \eta \\
& \lesssim \int_{\rr {4d}} \eabs{(x,\xi)}^{-2d-1} \eabs{(y,\eta)}^{-2d-1} \, dx \, dy \, d \xi \, d \eta 
\lesssim 1
\end{aligned}
\end{equation}
provided $k \geqslant |m| + r + 4 d + 2 + L$. 

Finally we assume $(x,y,\xi,-\eta) \in G_2$. Then $(x,y,\xi,-\eta) \notin \Gamma_2$ so we have $(y,\eta) \notin \Omega_2$. 
Hence $(y,\eta) \in G$ where $G \subseteq T^* \rr d$ is closed, conic and does not intersect $WF_s(u)$. 
Assumption \eqref{parameterineq} gives
\begin{equation*}
\ep = s - r - m - 4 d > 0.
\end{equation*}
We obtain with the aid of \eqref{kernelSTFT} for any $(t,\theta) \in T^* \rr d$,
using $\eabs{(x,\xi)} \asymp \eabs{(y,\eta)}$, 
\begin{equation}\label{estimateC}
\begin{aligned}
& \int_{G_2'} |V_\Phi K(x,y,\xi,-\eta)| \, \eabs{(t,\theta)}^{r}  | V_\fy \fy (t-x,\theta-\xi)| 
\, |V_{\overline \fy} u(y,\eta)| \, dx \, dy \, d \xi \, d \eta \\
& \lesssim \int_{G_2'} \eabs{(x,y,\xi,\eta)}^{m} \eabs{(x,\xi)}^{r} |V_{\overline \fy} u(y,\eta)|  \, dx \, dy \, d \xi \, d \eta \\
& \lesssim \int_{G_2'} \eabs{(x,y,\xi,\eta)}^{-4d-\ep} \, \eabs{(y,\eta)}^{r + m + 4 d +\ep} |V_{\overline \fy} u(y,\eta)|  \, dx \, dy \, d \xi \, d \eta \\
& \lesssim \sup_{w \in G} \, \eabs{w}^{s} |V_{\overline \fy} u(w)|
< \infty. 
\end{aligned}
\end{equation}

We can now combine \eqref{estimand1}, \eqref{estimateA}, $\Gamma_1 \subseteq G_1 \cup G_2$, \eqref{estimateB} and \eqref{estimateC} to conclude
\begin{equation*}
\sup_{(t,\theta) \in \Sigma_0} \eabs{(t,\theta)}^r \, |V_\fy (\cK u) (t,\theta)| < \infty. 
\end{equation*}
Thus $(t_0,\theta_0) \notin WF_r (\cK u)$. 
\end{proof}

\begin{rem}
Under assumption \eqref{WKjempty}, Theorem \ref{WFphaseincl} implies H\"ormander's result \cite[Proposition~2.11]{Hormander1} for the Gabor wave front set
\begin{equation*}
WF (\cK u) \subseteq WF(K)' \circ WF (u).
\end{equation*}
In fact this follows from Proposition \ref{WFuWFsu} and the claim $\overline{G \circ G_1} \subseteq G \circ \overline{G_1}$ 
if $G \subseteq \rr {2d} \setminus 0$ is closed, conic and satisfies \eqref{assumpconelem}, and $G_1 \subseteq \rr d \setminus 0$ is merely conic (cf. Lemma \ref{konlemma1}). 

To show the claim, assume that $x_n \in G \circ G_1$ for $n \geqslant 1$ and $x_n \rightarrow x \in \rr d$ as $n \rightarrow \infty$. 
For each $n \in \no$ there exists $y_n \in G_1$  such that $(x_n,y_n) \in G$. 

Suppose that the sequence $\{ y _n \}_{n=1}^\infty \subseteq G_1$ is not bounded. 
Then for some subsequence we have $|y_{n_j}| \geqslant j$ for $j \in \no$.
The cone property of $G$ gives $|y_{n_j}|^{-1} (x_{n_j}, y_{n_j}) \in G$, and this sequence converges to $(0,y) \in G$ for some $y \in \overline{G_1} \cap S_{d-1}$ as $j \rightarrow \infty$, after passage to a further subsequence. This contradicts the assumption \eqref{assumpconelem}, and it follows that the sequence $\{ y _n \}_{n=1}^\infty \subseteq G_1$ must be bounded.

Therefore we have convergence
\begin{equation*}
(x_{n_j}, y_{n_j} ) \rightarrow (x,y) \in G, \quad j \rightarrow \infty, 
\end{equation*}
for some subsequence, and $y \in \overline{G_1} \cup \{ 0 \}$. 
Due to the assumption \eqref{assumpconelem} we have $x \neq 0$ and $y \neq 0$, so $y \in \overline{G_1}$ which gives $x \in G \circ \overline{G_1}$. 
This proves the claim $\overline{G \circ G_1} \subseteq G \circ \overline{G_1}$.
\end{rem}

\subsection{Propagation of $s$-Gabor singularities in the general case}

In this subsection we combine Theorem \ref{WFphaseincl} with results in \cite{Rodino2} in order to obtain results on propagation of singularities for Schr\"odinger equations. 
H\"ormander's result \cite[Theorem~5.12]{Hormander2} says that the Schwartz kernel of the Schr\"odinger propagator $e^{-t q^w(x,D)}$ is a so called Gaussian oscillatory integral. 
Therefore we first need some background on Gaussian oscillatory integrals and the associated symplectic geometry. 
We give a brief description of these topics and refer to \cite[Section~5]{Hormander2} and \cite[Sections~3 and 4]{Rodino2} for richer accounts. 

Let $q$ be a quadratic form on $T^*\rr d$ defined by a symmetric matrix $Q \in \cc {2 d \times 2 d}$ with $\re Q \geqslant 0$ and Hamilton matrix $F=\J Q$.
According to \cite[Theorem~5.12]{Hormander2} the propagator is 
\begin{equation*}
e^{-t q^w(x,D)} = \cK_{e^{-2 i t F}}
\end{equation*}
where $\cK_{e^{-2 i t F}}: \cS(\rr d) \mapsto \cS'(\rr d) $ is the linear continuous operator with Schwartz kernel 
\begin{equation}\label{schwartzkernel1}
K_T (x,y) = (2 \pi)^{-(d+N)/2} \sqrt{\det \left( 
\begin{array}{ll}
p_{\theta \theta}''/i & p_{\theta y}'' \\
p_{x \theta}'' & i p_{x y}'' 
\end{array}
\right) } \int_{\rr N} e^{i p(x,y,\theta)} d\theta \in \cS'(\rr {2d}) 
\end{equation}
with $T = {e^{-2 i t F}}$. 
The Schwartz kernel $K_T$ is a Gaussian oscillatory integral with respect to a quadratic form $p$ on $\rr {2 d + N}$ \cite{Hormander2,Rodino2}. 
The quadratic form $p$ satisfies certain properties (cf. \cite[Section~3]{Rodino2}) including $\im p \geqslant 0$, and it is not uniquely determined by the oscillatory integral. 
Each Gaussian oscillatory integral is bijectively associated with a Lagrangian in $T^* \cc {2d}$. 
Certain aspects of the analysis of the propagator are natural to study in terms of the corresponding Lagrangian. 

The Lagrangian associated to the Schwartz kernel $K_{e^{-2 i t F}}$ is 
\begin{equation}\label{twistgraph}
\lambda = \{ (x, y, \xi, -\eta) \in T^* \cc {2d}: \, (x,\xi) =  e^{-2 i t F} (y,\eta) \} \subseteq T^* \cc {2d}.
\end{equation}
With the twist operation (cf. \eqref{twist})
\begin{equation*}
(x, y, \xi, \eta)' = (x, y, \xi, -\eta), \quad x, y, \xi, \eta \in \cc d, 
\end{equation*}
the Lagrangian is thus the twisted graph Lagrangian defined by the matrix $e^{-2 i t F}$. 
By \cite[Lemma~4.2]{Rodino2} we have $T=e^{-2 i t F} \in \Sp(d,\co)$, and 
\begin{equation*}
i \left(  \sigma(\overline{T X}, TX) - \sigma(\overline X, X) \right) \geqslant 0, \quad X \in T^* \cc d. 
\end{equation*}
A matrix $T \in \Sp(d,\co)$ that satisfies this property is called \emph{positive} \cite[p.~444]{Hormander2}.  

Since $K_{e^{-2 i t F}} \in \cS'(\rr {2d})$ the propagator is a continuous operator $e^{-t q^w(x,D)}: \cS (\rr d) \mapsto \cS' (\rr d)$. 
According to \cite[Proposition~5.8 and Theorem~5.12]{Hormander2} we have in fact continuity $e^{-t q^w(x,D)}: \cS (\rr d) \mapsto \cS (\rr d)$. 
In order to extend the propagator to an operator $e^{-t q^w(x,D)}: \cS' (\rr d) \mapsto \cS' (\rr d)$ one considers the formal adjoint. 
The formal adjoint of $\cK_T$ for a positive matrix $T \in \Sp(d,\co)$ is $\cK_{\overline T^{-1}}$ 
where $\overline T^{-1}  \in \Sp(d,\co)$ is positive \cite[p.~446]{Hormander2}. 
Thus $\cK_T$ may be extended uniquely to a continuous operator $\cK_T: \cS' (\rr d) \mapsto \cS' (\rr d)$ by
\begin{equation*}
( \cK_T u, \fy) = (u, \cK_{\overline T^{-1}} \fy), \quad u \in \cS'(\rr d), \quad \fy \in \cS(\rr d).
\end{equation*}
Since $e^{-t q^w(x,D)} = \cK_{e^{-2 i t F}}$ this gives in particular the unique extension of the propagator to a continuous operator $e^{-t q^w(x,D)}: \cS' (\rr d) \mapsto \cS' (\rr d)$. 

According to \cite[Theorem~4.3]{Rodino2}, the Gabor wave front set of the Schwartz kernel $K_{e^{-2 i t F}}$ of the propagator $e^{-t q^w(x,D)}$ for $t \geqslant 0$ obeys the  inclusion
\begin{equation}\label{WFkernel1}
\begin{aligned}
& WF( K_{e^{-2 i t F}} ) \\
& \subseteq \{ (x, y, \xi, -\eta) \in T^* \rr {2d} \setminus 0 : \, (x,\xi) =  e^{-2 i t F} (y,\eta), \, \im e^{-2 i t F} (y,\eta) = 0 \}. 
\end{aligned}
\end{equation}
Since $e^{-2 i t F} \in \cc {2d \times 2d}$ is invertible, $WF_1 ( K_{e^{-2 i t F}} ) = WF_2 ( K_{e^{-2 i t F}} )= \emptyset$, cf. \eqref{WFproj}.

A Gaussian oscillatory integral with respect to a quadratic form as in \eqref{schwartzkernel1} is a particular case of a so called Gaussian distribution \cite[Section~5]{Hormander2}. 
An oscillatory integral of this type has the form
\begin{equation*}
C e^{i \rho} (\delta_0 \otimes 1) \circ A  
\end{equation*}
where $\rho$ is a quadratic form on $\rr {2d}$ with $\im \rho \geqslant 0$, $\delta_0=\delta_0(\rr k)$, $k \leqslant 2d$, $A \in \rr {2d \times 2d}$ is an invertible matrix 
and $C \in \co \setminus 0$. 
From this information it follows that we may take $m=0$ in \eqref{kernelSTFT} when $K=K_{e^{-2 i t F}}$. 

The observations above allow us to combine \eqref{WFkernel1} and Theorem \ref{WFphaseincl} with $\cK=e^{-t q^w(x,D)}$ and $m=0$. 
This gives the following propagation result for Schr\"odinger equations and the $s$-Gabor wave front set.  

\begin{cor}\label{propagationsing1}
Suppose $q$ is a quadratic form on $T^*\rr d$ defined by a symmetric matrix $Q \in \cc {2d \times 2d}$, $\re Q \geqslant 0$, $F=\J Q$, 
$s,r \in \ro$ and $r < s - 4d$.
Then for $u \in \cS'(\rr d)$
\begin{align*}
WF_r (e^{-t q^w(x,D)}u) 
& \subseteq  e^{-2 i t F} \left( WF_s (u) \cap \Ker (\im e^{-2 i t F} ) \right), \quad t \geqslant 0. 
\end{align*}
\end{cor}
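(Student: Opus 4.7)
The plan is to apply Theorem \ref{WFphaseincl} to the operator $\cK = e^{-t q^w(x,D)}$ and then unwind the right-hand side $WF(K)' \circ WF_s(u)$ using the external description of $WF(K_{e^{-2itF}})$ recorded in \eqref{WFkernel1}. All the hypotheses needed for Theorem \ref{WFphaseincl} have already been collected in the preceding discussion: by \cite[Theorem~5.12]{Hormander2} the propagator maps $\cS(\rr d) \to \cS(\rr d)$ continuously, and the formal-adjoint argument via $\cK_{\overline T^{-1}}$ yields the unique continuous extension to $\cS'(\rr d) \to \cS'(\rr d)$. Since $K_{e^{-2itF}}$ is a Gaussian oscillatory integral of the standard form $C e^{i\rho} (\delta_0 \otimes 1) \circ A$, one can take $m=0$ in the polynomial bound \eqref{kernelSTFT}. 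Finally, since the matrix $e^{-2itF}$ is invertible, the inclusion \eqref{WFkernel1} forbids any element of $WF(K_{e^{-2itF}})$ from having its last or first two $\rr d$-coordinates equal to zero, so $WF_1(K) = WF_2(K) = \emptyset$.

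With those verifications in place, Theorem \ref{WFphaseincl} applied with $m=0$ yields, for every pair $r < s - 4d$,
\begin{equation*}
WF_r(e^{-t q^w(x,D)} u) \subseteq WF(K_{e^{-2itF}})' \circ WF_s(u), \qquad t \geqslant 0.
\end{equation*}
The remaining step is to read off the composition. By the definition of the relation $\circ$ combined with the twist convention \eqref{twist}, a point $(x,\xi)$ belongs to $WF(K_{e^{-2itF}})' \circ WF_s(u)$ if and only if there exists $(y,\eta) \in WF_s(u)$ with $(x,y,\xi,-\eta) \in WF(K_{e^{-2itF}})$. By \eqref{WFkernel1} this forces $(x,\xi) = e^{-2itF}(y,\eta)$ together with $\im e^{-2itF}(y,\eta) = 0$, i.e.\ $(y,\eta) \in \Ker(\im e^{-2itF})$. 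Hence every such $(x,\xi)$ lies in $e^{-2itF}\big( WF_s(u) \cap \Ker(\im e^{-2itF}) \big)$, which is exactly the claimed inclusion.

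The only subtle point is interpreting the second condition in \eqref{WFkernel1}. Since $e^{-2itF} \in \cc{2d \times 2d}$ acts on the real vector $(y,\eta) \in T^*\rr d$ producing an \emph{a priori} complex vector, one must read $\Ker(\im e^{-2itF})$ as the real kernel of the real matrix $\im e^{-2itF} = (2i)^{-1}(e^{-2itF} - \overline{e^{-2itF}})$; and the condition $(x,\xi) = e^{-2itF}(y,\eta) \in T^*\rr d$ of \eqref{WFkernel1} is exactly the statement that $(y,\eta)$ belongs to this kernel. Once this identification is made, no further calculation is required: the corollary is a direct specialization of Theorem \ref{WFphaseincl} to the kernel described in \eqref{WFkernel1}, and I expect the whole argument to fit in a few lines.
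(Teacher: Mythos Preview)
Your proposal is correct and follows exactly the approach of the paper: the paper does not give a separate proof of this corollary but simply states it as the immediate consequence of combining Theorem \ref{WFphaseincl} (with $m=0$) and the kernel inclusion \eqref{WFkernel1}, after the preceding paragraphs have verified the continuity on $\cS$ and $\cS'$, the bound $m=0$, and $WF_1(K)=WF_2(K)=\emptyset$. Your write-up is essentially an explicit version of that one-line deduction.
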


As in the proof of \cite[Theorem~5.2]{Rodino2}, the latter inclusion can be sharpened (at the expense of decreasing $r$) using the semigroup property of the propagator \cite[Proposition~5.9 and Theorem~5.12]{Hormander2}
\begin{equation*}
e^{-(t_1+t_2)q^w(x,D)}=\pm e^{-t_1q^w(x,D)}e^{-t_2q^w(x,D)}, \quad t_1,t_2 \geqslant 0. 
\end{equation*} 
One obtains then the following result, where the singular space 
\begin{equation*}
S=\Big(\bigcap_{j=0}^{2d-1} \Ker\big[\re F(\im F)^j \big]\Big) \cap T^*\rr d \subseteq T^*\rr d 
\end{equation*}
of the quadratic form $q$ plays a crucial role. 
The singular space has recently been found to play a decisive role in the analysis of spectral and hypoelliptic properties of non-elliptic quadratic operators, 
 cf. \cite{Hitrik1,Hitrik2,Hitrik3,Pravda-Starov1,Pravda-Starov2,Viola1,Viola2}. 
 
The following result is an $s$-Gabor wave front set refinement of \cite[Theorem~5.2]{Rodino2} 
(which is formulated in terms of the Gabor wave front set). 
This result implies \cite[Theorem~5.2]{Rodino2} via Proposition \ref{WFuWFsu}. 
Note that the upper bound for $r$ is $4d$ smaller than in Corollary \ref{propagationsing1}. 
As shown in the proof of \cite[Theorem~5.2]{Rodino2}
\begin{equation*}
\left( e^{2 t \im F} \left( WF_s (u) \cap S \right) \right) \cap S \subseteq e^{-2 i t F} \left( WF_s (u) \cap \Ker (\im e^{-2 i t F} ) \right)
\end{equation*}
so the right hand side of the following inclusion is a sharpening of the right hand side in the inclusion of Corollary \ref{propagationsing1}. 

\begin{cor}\label{propagationsing2}
Suppose $q$ is a quadratic form on $T^*\rr d$ defined by a symmetric matrix $Q \in \cc {2d \times 2d}$, $\re Q \geqslant 0$, $F=\J Q$,  
$s,r \in \ro$ and $r < s - 8 d$.
Then for $u \in \cS'(\rr d)$
\begin{align*}
WF_r (e^{-t q^w(x,D)}u) 
& \subseteq  \left( e^{2 t \im F} \left( WF_s (u) \cap S \right) \right) \cap S, \quad t > 0. 
\end{align*}
\end{cor}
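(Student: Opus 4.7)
My plan is to mimic the proof of \cite[Theorem~5.2]{Rodino2}, using Corollary \ref{propagationsing1} as the one-step tool but iterating it via the semigroup law. Each application of the corollary costs $4d$ in the decay parameter, which accounts for the passage from $r<s-4d$ in Corollary \ref{propagationsing1} to $r<s-8d$ here.

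First I would use the semigroup identity $e^{-tq^w(x,D)}=\pm\, e^{-t_1 q^w(x,D)}e^{-t_2 q^w(x,D)}$ with $t_1,t_2>0$, $t_1+t_2=t$, which is available by \cite[Proposition~5.9 and Theorem~5.12]{Hormander2}. Since $r<s-8d$ I can choose an intermediate exponent $s'\in\ro$ with $r<s'-4d$ and $s'<s-4d$. Applying Corollary \ref{propagationsing1} to the factor $e^{-t_2 q^w(x,D)}$ with parameters $s,s'$ gives
\begin{equation*}
WF_{s'}(e^{-t_2 q^w(x,D)}u)\subseteq e^{-2it_2 F}\bigl(WF_s(u)\cap\Ker(\im e^{-2it_2 F})\bigr),
\end{equation*}
and applying it to $e^{-t_1 q^w(x,D)}$ with parameters $s',r$ (with $v=e^{-t_2 q^w(x,D)}u$) gives
\begin{equation*}
WF_r(e^{-tq^w(x,D)}u)\subseteq e^{-2it_1 F}\Bigl(e^{-2it_2 F}\bigl(WF_s(u)\cap\Ker(\im e^{-2it_2 F})\bigr)\cap\Ker(\im e^{-2it_1 F})\Bigr).
\end{equation*}
Because the left-hand side is independent of the splitting, the inclusion persists after intersecting the right-hand side over all admissible $(t_1,t_2)$.

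The decisive step is then geometric and is exactly the content of the proof of \cite[Theorem~5.2]{Rodino2}: the singular space $S$ can be characterized as the real part of the intersection of the kernels $\Ker(\im e^{-2isF})$ as $s>0$ varies (equivalently, over all sufficiently small $s>0$), it is preserved by the flow, and on $S$ the complex linear map $e^{-2itF}$ coincides with the real symplectic flow $e^{2t\im F}$ of the Hamilton vector field of $\im q$. Inserting these three facts into the iterated inclusion above confines the image inside $S$ and converts $e^{-2itF}$ into $e^{2t\im F}$, yielding
\begin{equation*}
WF_r(e^{-tq^w(x,D)}u)\subseteq\bigl(e^{2t\im F}(WF_s(u)\cap S)\bigr)\cap S,
\end{equation*}
which is the asserted inclusion.

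The main obstacle is the geometric step: verifying that, after iteration, the intersection of the various $\Ker(\im e^{-2isF})$ really does reduce to $S$, and that $e^{-2itF}|_S=e^{2t\im F}|_S$. Both are intrinsic properties of the Hamilton map of a quadratic form with $\re Q\geqslant 0$ and are proved in \cite{Rodino2} (building on \cite{Hitrik1,Pravda-Starov1}). I would invoke them essentially verbatim; the only new bookkeeping, compared with the Gabor wave front set case treated there, is tracking the two-fold loss $4d+4d=8d$ in the polynomial decay index introduced by the semigroup splitting.
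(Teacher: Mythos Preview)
Your proposal is correct and matches the paper's own argument essentially verbatim: the paper does not give a separate proof but simply says to follow the proof of \cite[Theorem~5.2]{Rodino2} using the semigroup identity $e^{-(t_1+t_2)q^w}=\pm e^{-t_1 q^w}e^{-t_2 q^w}$ together with Corollary~\ref{propagationsing1}, noting that the double application accounts for the loss $8d=4d+4d$. Your bookkeeping with the intermediate index $s'$ and your deferral of the geometric identification of $S$ and of $e^{-2itF}|_S=e^{2t\im F}|_S$ to \cite{Rodino2} are exactly what the paper intends.
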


\subsection{Consequences of the assumption of normality of the Hamiltonian}

Finally we draw some conclusions on propagation of the $s$-Gabor wave front set under the additional hypothesis of normality of the Hamiltonian. 
This means that $q^w(x,D)$ commutes with its formal adjoint $\overline{q}^w(x,D)$, that is  
\begin{equation*}
[q^w(x,D), \overline{q}^w(x,D)] = 0  
\end{equation*}
acting on $\mathscr{S}(\rr d)$.
This condition of normality is equivalent to the condition on the Poisson bracket 
\begin{equation*}
\{q,\overline{q}\} 
= \la q_{\xi}', \overline q_{x}' \ra - \la q_{x}', \overline q_{\xi}' \ra 
= 2i\{\im q,\re q\} \equiv 0.
\end{equation*}
The condition is also equivalent to $[\re F,\im F]=0$
since the Hamilton map of the Poisson bracket $\{\im q,\re q\}$ is given by $-2[\im F,\re F]$, see e.g. \cite[Lemma~2]{Pravda-Starov1}.
In this case the singular space reduces to 
\begin{equation*}
S=\Ker (\re F)\cap T^*\rr d.
\end{equation*}

The following result is an $s$-Gabor wave front set refinement of \cite[Corollary~5.3]{Rodino2}. 
Again it implies the latter result via Proposition \ref{WFuWFsu}. 
Note that there is no loss of order $s$ as opposed to Corollary \ref{propagationsing2}. 

\begin{prop}\label{propagationsing3}
Suppose $q$ is a quadratic form on $T^*\rr d$ defined by a symmetric matrix $Q \in \cc {2d \times 2d}$ such that $\re Q \geqslant 0$. 
If $F = \J Q$ satisfy $[\re F,\im F] = 0$ then for 
$s \in \ro$ and $t>0$
we have for $u \in \cS'(\rr d)$
\begin{equation*}
\begin{aligned}
WF_s\big(e^{-tq^w(x,D)}u\big) 
\subseteq \left( e^{2t \im F} \left( WF_s(u) \cap \Ker (\re F) \right) \right) 
\cap \Ker (\re F) .
\end{aligned}
\end{equation*}
\end{prop}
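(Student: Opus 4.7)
The plan is to exploit the normality condition $[\re F,\im F]=0$---equivalent by \cite[Lemma~2]{Pravda-Starov1} to $[q^w(x,D),\overline{q}^w(x,D)]=0$ and hence to $[(\re q)^w(x,D),(\im q)^w(x,D)]=0$---to split the propagator as
\begin{equation*}
e^{-tq^w(x,D)} = e^{-t(\re q)^w(x,D)}\, e^{-it(\im q)^w(x,D)}, \quad t\geqslant 0,
\end{equation*}
and handle the two factors separately with no loss of order. Regarding $i\im q$ as a purely imaginary quadratic form with Hamilton matrix $i\im F$, identity \eqref{propagatorsymplectic} identifies the second factor as the metaplectic operator $\mu(e^{2t\im F})$, for which Lemma~\ref{symplecticGabors} (cf.~\eqref{realcaseWFs}) gives the exact identity $WF_s(\mu(e^{2t\im F})u)=e^{2t\im F}WF_s(u)$. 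Since $[\re F,\im F]=0$, the subspace $\Ker(\re F)$ is invariant under $\im F$, hence under $e^{2t\im F}$, so $(e^{2t\im F}A)\cap\Ker(\re F) = e^{2t\im F}(A\cap \Ker(\re F))$ for any $A\subseteq T^*\rr d$.

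Given this, the proposition reduces to the no-loss heat-type inclusion
\begin{equation*}
WF_s\bigl(e^{-t(\re q)^w(x,D)}v\bigr) \subseteq WF_s(v)\cap \Ker(\re F),\quad v\in\cS'(\rr d), \quad t>0,
\end{equation*}
applied with $v=\mu(e^{2t\im F})u$ and combined with the invariance just noted. I would prove this heat-type inclusion by establishing its two components separately: the microlocality $WF_s(e^{-t(\re q)^w(x,D)}v)\subseteq WF_s(v)$, and the kernel concentration $WF_s(e^{-t(\re q)^w(x,D)}v)\subseteq \Ker(\re F)$.

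For microlocality, I would show that the Weyl symbol $a_t$ of $e^{-t(\re q)^w(x,D)}$ belongs to the H\"ormander class $S_{0,0}^0$ for every $t>0$ and then apply Proposition~\ref{microlocal1}. Since $\re q\geqslant 0$ is a real quadratic form, a Mehler-type computation based on \cite[Theorem~5.12]{Hormander2} represents $a_t$ as a Gaussian $c_t\exp(-\ell_t(x,\xi))$ with $\ell_t$ a quadratic form satisfying $\re\ell_t\geqslant 0$, yielding uniform boundedness of $a_t$ together with all its derivatives. For kernel concentration, I would analyze the Schwartz kernel $K=K_{e^{-2it\re F}}$ via \eqref{WFkernel1}. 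Writing $e^{-2it\re F}=\cos(2t\re F)-i\sin(2t\re F)$ as matrix functions of the real matrix $\re F$, \eqref{WFkernel1} forces $(y,\eta)\in\Ker(\sin(2t\re F))$ and $(x,\xi)=\cos(2t\re F)(y,\eta)$. Because $\re F=\J\re Q$ for a non-negative real symmetric $\re Q$ has only purely imaginary or zero eigenvalues (and on the nilpotent part of the zero eigenspace $\sin(2t\re F)$ factors as $\re F\cdot P(\re F)$ with $P(0)=2t$ invertible), one obtains $\Ker(\sin(2t\re F))=\Ker(\re F)$ for every $t>0$. On this kernel $\cos(2t\re F)$ acts as the identity, so $(x,\xi)=(y,\eta)\in\Ker(\re F)$; since $WF_1(K)=WF_2(K)=\emptyset$ by the invertibility of $e^{-2it\re F}$, H\"ormander's relation bound \cite[Proposition~2.11]{Hormander1} yields $WF(e^{-t(\re q)^w(x,D)}v)\subseteq \Ker(\re F)$, from which the kernel-concentration claim follows via $WF_s\subseteq WF$.

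The main technical obstacle I foresee is the first step of the microlocality argument: establishing that $a_t\in S_{0,0}^0$ requires carrying out the Mehler-type calculation from the oscillatory-integral representation \eqref{schwartzkernel1} and verifying uniform decay of all derivatives, which in turn rests on the non-negativity of the real part of the Gaussian exponent. The linear-algebraic computation of $\Ker(\sin(2t\re F))$ is routine once the Jordan structure of a Hamilton matrix of a non-negative quadratic form is in hand.
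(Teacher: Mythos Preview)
Your proposal is correct and the core strategy coincides with the paper's: factor the propagator as $e^{-t(\re q)^w(x,D)}\,e^{-t(i\im q)^w(x,D)}$ via $[\re F,\im F]=0$, handle the second factor by the metaplectic identity \eqref{realcaseWFs}, and obtain microlocality of the first factor from $a_t\in S_{0,0}^0$ together with Proposition~\ref{microlocal1}. Two points of departure are worth flagging. First, the paper does not justify the factorization by the commutator relation alone; it invokes \cite[Proposition~5.9 and Theorem~5.12]{Hormander2} after verifying that both $e^{-2it\re F}$ and $e^{2t\im F}$ are positive elements of $\Sp(d,\co)$, which yields the operator identity $e^{-tq^w(x,D)}=\pm e^{-t(\re q)^w(x,D)}e^{-t(i\im q)^w(x,D)}$ rigorously (your appeal to commuting generators is formally right but needs this kind of backing in the unbounded setting). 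Second, the paper obtains the $\Ker(\re F)$ containment not by your direct analysis of $\Ker(\sin(2t\re F))$, but simply by applying Corollary~\ref{propagationsing2} with index $s+8d+1$: the loss there is harmless because only the set-theoretic constraint $WF_s\subseteq\Ker(\re F)$ is retained, and the no-loss inclusion $WF_s(e^{-tq^w}u)\subseteq e^{2t\im F}WF_s(u)$ then finishes the argument. Your route via \eqref{WFkernel1} and the Jordan structure of $\re F$ is valid and more self-contained, but the paper's is shorter given what is already proved. Finally, for $a_t\in S_{0,0}^0$ the paper cites Nicola \cite[Theorem~1.2]{Nicola1} rather than redoing the Mehler computation; your proposed direct verification is feasible (since $\cos(t\re F)$ is invertible for all $t$ when $\re Q\geqslant 0$, the symbol is a bounded Gaussian), but citing the existing result avoids the bookkeeping.
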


\begin{proof}
Let $s \in \ro$. 
As an immediate consequence of Corollary \ref{propagationsing2} we have for $t>0$
\begin{equation*}
WF_s \big(e^{-tq^w(x,D)}u \big) 
\subseteq \left( e^{2t \im F} \left( WF_{s+8d+1}(u) \cap \Ker (\re F) \right) \right) \cap \Ker (\re F).
\end{equation*}
Thus it suffices to show 
\begin{equation}\label{complexcase1}
WF_s(e^{- t q^w(x,D)} u) \subseteq  e^{2 t \, \im F} \, WF_s(u), \quad t \geqslant 0.
\end{equation}

The matrix $T= e^{-2 i t F} \in \Sp(d,\co)$ is positive by \cite[Lemma~4.2]{Rodino2}, and therefore also 
$\overline T^{-1} =  e^{-2 i t \overline{F}} \in \Sp(d,\co)$ is positive (cf. \cite[p.~446]{Hormander2}).
The assumption $[\re F, \im F] = 0$ imply 
\begin{equation*}
e^{-2 i t F} e^{-2 i t \overline{F}} = e^{-4 i t \re F}
\end{equation*}
and $e^{-4 i t \re F} \in  \Sp(d,\co)$ is positive by \cite[Proposition~5.9]{Hormander2}. 
Since $t \geqslant 0$ is arbitrary we may conclude that $e^{-2 i t \re F} \in  \Sp(d,\co)$ is positive.

From these considerations we also see that $e^{2 i t \overline{F}} \in \Sp(d,\co)$ which implies 
\begin{equation*}
e^{-2 i t F} e^{2 i t \overline{F}}  = e^{-2 i t (F-\overline{F})} = e^{4 t \im F} \in \Sp(d,\co). 
\end{equation*}
Since $e^{4 t \im F}$ is a real matrix we have $e^{2 t \im F} \in \Sp(d,\ro)$, again replacing $t$ by $t/2$. 
Since an $\Sp(d,\ro)$ matrix is trivially positive, we have now showed that both $e^{-2 i t \re F} \in  \Sp(d,\co)$ and $e^{2 t \im F} \in \Sp(d,\ro)$
are positive. 

We have, again from $[\re F, \im F] = 0$,
\begin{equation*}
e^{-2 i t F} = e^{2t (\im F - i \re F)} = e^{2 t \im F} e^{-2 t i \re F} = e^{-2 t i \re F} e^{2 t \im F}. 
\end{equation*}

By \cite[Proposition~5.9 and Theorem~5.12]{Hormander2}, from the positivity of $e^{-2 i t \re F} \in  \Sp(d,\co)$ and $e^{2 t \im F} \in \Sp(d,\ro)$ we now obtain
\begin{equation*}
e^{-t q^w(x,D)} = \pm e^{-t (\re q)^w(x,D)} e^{-t (i \im q)^w(x,D)}
\end{equation*}
where the sign ambiguity is related to the corresponding feature of the metaplectic group (see \cite[Proposition~5.9]{Hormander2}). 
The term metaplectic semigroup is used in \cite{Hormander2} to describe this phenomenon. 

To prove \eqref{complexcase1} it suffices therefore by \eqref{realcaseWFs} to show 
\begin{equation}\label{complexsubcase1}
WF_s(e^{-t (\re q)^w(x,D)} u) \subseteq WF_s(u), \quad u \in \cS'(\rr d),  
\end{equation}
that is, microlocality of the operator $e^{-t (\re q)^w(x,D)}$ with respect to the $s$-Gabor wave front set. 

The operator $e^{-t (\re q)^w(x,D)}$ is the solution operator to the initial value problem
\begin{equation*}
\left\{
\begin{array}{rl}
\partial_t u(t,x) + (\re q)^w(x,D) u (t,x) & = 0, \\
u(0,\cdot) & = u_0.
\end{array}
\right.
\end{equation*}
According to F. ~Nicola's result \cite[Theorem~1.2]{Nicola1}, for any $t \geqslant 0$ the operator $e^{-t (\re q)^w(x,D)}$ is a pseudodifferential operator with Weyl symbol $a_t \in S_{0,0}^0$, that is $e^{-t (\re q)^w(x,D)} = a_t^w(x,D)$. Appealing to Proposition \ref{microlocal1} we may conclude that the inclusion \eqref{complexsubcase1} indeed holds. 
\end{proof}

\section{Some particular equations}\label{secequations}

In this section we look at propagation of the $s$-Gabor wave front set according to Proposition \ref{propagationsing3} 
in some particular cases of quadratic forms $q$. 

\subsection{A generalization of \eqref{example3sa}}

Consider the equation
\begin{equation*}
\partial_t u(t,x) + \la x,A x \ra u(t,x) = 0, \quad t \geqslant 0, \quad x \in \rr d, 
\end{equation*}
where $0 \leqslant A \in \rr {d \times d}$ is symmetric. 
It is a particular case of the general equation \eqref{schrodeq} where the quadratic form $q(x,\xi) = \la x, A x \ra$ 
is defined by the matrix
\begin{equation*}
Q = \left(
\begin{array}{cc}
A & 0 \\
0 & 0
\end{array}
\right),
\end{equation*}
and the Hamilton matrix is
\begin{equation*}
F = \J Q = \left(
\begin{array}{cc}
0 & 0 \\
-A & 0
\end{array}
\right). 
\end{equation*}
The solution operator is 
\begin{equation*}
e^{-t q^w(x,D)} u (x) 
= e^{- t \la x, A x \ra} u(x). 
\end{equation*}
Clearly $[\re F,\im F]=0$ and $\Ker (\re F) = \Ker A \times \rr d$.
Proposition \ref{propagationsing3} yields for $s \in \ro$
\begin{equation}\label{propagation1}
WF_s ( e^{- t \la \cdot, A \cdot \ra}  u) 
\subseteq  WF_s (u) \cap ( \Ker A \times \rr d ), \quad t > 0, \quad u \in \cS'(\rr d). 
\end{equation}

This example gives a generalization of \eqref{example3sa} as follows. 
Let $t=1$ and $u=1$. By \eqref{propagation1} and \eqref{example2sa}
\begin{equation}\label{propagation1a}
WF_s ( e^{- \la \cdot, A \cdot \ra} ) 
\subseteq \Ker A \setminus 0 \times \{ 0 \}, \quad s > 0. 
\end{equation}
If $A$ is invertible the opposite inclusion is trivial, and it holds also if $A$ is singular by the following argument. 
Let $x \in \Ker A \setminus 0$ and let $\fy(y) = e^{-|y|^2}$ for $y \in \rr d$. 
The STFT evaluated at $(a x, 0) \in T^* \rr d$ is then for any $a>0$
\begin{align*}
V_\fy (e^{- \la \cdot, A \cdot \ra} )( a x, 0)
& = \int_{\rr d} e^{- \la y, A y \ra - |y-ax|^2 } \, dy \\
& = \int_{\rr d} e^{- \la ax+ y, A (ax+y) \ra - |y|^2  } \, dy \\
& = \int_{\rr d} e^{- \la y, A y \ra - |y|^2 } \, dy 
\end{align*}
which is a positive constant that does not depend on $a>0$. 
The STFT does therefore not decay like $\eabs{z}^{-s}$, for $z$ in any open cone in $T^* \rr d$ containing $(a x, 0)$, for $s>0$. 
This proves the opposite inclusion to \eqref{propagation1a} so we have
\begin{equation}\label{propagation1b}
WF_s ( e^{- \la \cdot, A \cdot \ra} ) 
= \Ker A \setminus 0 \times \{ 0 \}, \quad s>0.
\end{equation}
We also have 
\begin{equation}\label{propagation1s}
WF_s ( e^{- \la \cdot, A \cdot \ra} ) 
= \emptyset, \quad s \leqslant 0, 
\end{equation}
since 
\begin{equation*}
|V_\fy (e^{- \la \cdot, A \cdot \ra} )( x, \xi)|
\lesssim 1, \quad (x,\xi) \in \rr {2d}, \quad \fy \in \cS(\rr d) \setminus 0.  
\end{equation*}

Now let $A \in \cc {d \times d}$ be symmetric with $\im A \geqslant 0$. 
Considered a multiplication operator we have  $e^{i \la x, \re A x \ra/2 } = \mu(\chi)$
where 
\begin{equation*}
\chi = 
\left(
\begin{array}{ll}
I & 0 \\
\re A  & I 
\end{array}
\right) \in \Sp(d,\ro)
\end{equation*}
(cf. \eqref{symplecticoperator}).  
We obtain from Lemma \ref{symplecticGabors} and \eqref{propagation1b}
\begin{equation*}
\begin{aligned}
WF_s (  e^{i \la \cdot, A \, \cdot \ra/2 }  ) 
& = WF_s (  e^{i \la \cdot, \re A \, \cdot \ra/2 } e^{- \la \cdot, \im A \, \cdot \ra/2 } ) \\
& = \chi WF_s ( e^{- \la \cdot, \im A \, \cdot \ra/2 } ) \\
& = \{ (x, \re A \, x + \xi):  \, (x,\xi)  \in WF_s ( e^{- \la \cdot, \im A \, \cdot \ra/2 } )  \} \\
& = \{ (x, \re A \,x ):  \, x \in \rr d \cap \Ker (\im A) \setminus 0 \}, \quad s>0, 
\end{aligned}
\end{equation*}
which is the announced generalization of \eqref{example3sa}. 
We also obtain
\begin{equation*}
WF_s (  e^{i \la \cdot, A \, \cdot \ra/2 }  ) = \emptyset, \quad s \leqslant 0. 
\end{equation*}

\subsection{The heat equation}

The heat equation 
\begin{equation*}
\partial_t u(t,x) - \Delta_x u(t,x) = 0, \quad t \geqslant 0, \quad x \in \rr d, 
\end{equation*}
is a particular case of the general equation \eqref{schrodeq} where $q(x,\xi) = |\xi|^2$ is defined by the matrix
\begin{equation*}
Q = \left(
\begin{array}{cc}
0 & 0 \\
0 & I
\end{array}
\right) \in \rr {2d \times 2d},
\end{equation*}
with Hamilton matrix
\begin{equation*}
F = \J Q = \left(
\begin{array}{cc}
0 & I \\
0 & 0
\end{array}
\right). 
\end{equation*}

Since $[\re F, \im F] = 0$ and $\Ker(\re F) = \rr d \times \{0\}$ Proposition \ref{propagationsing3} gives
for any $s \in \ro$
\begin{equation}\label{heatinclusion}
WF_s(e^{-t q^w(x,D)} u) 
\subseteq  WF_s(u) \cap ( \rr d \times \{ 0 \} ), \quad t > 0, \quad u \in \cS'(\rr d). 
\end{equation}

Therefore, if $s \in \ro$, $u \in \cS'(\rr d)$ and
\begin{equation}\label{WFsassumption1}
WF_s(u) \cap ( \rr d \times \{ 0 \} ) = \emptyset 
\end{equation}
then $WF_s (e^{- t q^w(x,D)} u) = \emptyset$, 
that is 
\begin{equation}\label{WFsconclusion1}
|V_\fy (e^{- t q^w(x,D)} u)(z) | 
\lesssim \eabs{z}^{-s}, \quad z \in \rr {2d}, \quad t>0, 
\end{equation}
where $\fy \in \cS(\rr d) \setminus 0$. 
The assumption \eqref{WFsassumption1} means that 
there exists an open conic set $\Gamma \subseteq T^* \rr d \setminus 0$ such that 
\begin{equation}\label{WFsassumption2}
(\rr d \setminus 0) \times \{0\} \subseteq \Gamma, \quad
\sup_{z \in \Gamma} \eabs{z}^{s} |V_\fy u(z) | 
\lesssim 1. 
\end{equation}
Thus the mild regularity assumption in a conic neighborhood of the phase space directions $(\rr d \setminus 0) \times \{0\}$ on the initial datum \eqref{WFsassumption2} gives the global (isotropic) phase space conclusion \eqref{WFsconclusion1} on the solution $e^{- t q^w(x,D)} u$ for all $t>0$. 
There is an immediate regularizing effect of the heat propagator provided the initial datum has some regularity in the directions $(\rr d \setminus 0) \times \{0\}$ in phase space. 

Note that (cf. \eqref{example2sa})
\begin{equation*}
(\rr d \setminus 0) \times \{ 0 \} = WF_s ( 1 ) 
\end{equation*}
for $s>0$. 
When $s>0$ the assumption \eqref{WFsassumption1} thus means that $WF_s (u)$ is disjoint from the $s$-Gabor wave front set of the function $1$, which is an eigenfunction for the heat propagator. 
If the initial datum is the function $1$, the $s$-Gabor wave front set for the solution remains the same for all $t>0$, and if the initial datum has $s$-Gabor wave front set disjoint from that of the function $1$, then regularization of the solution occurs. 

The inclusion \eqref{heatinclusion} is a refinement of the corresponding result for the Gabor wave front set, see\cite[Eq.~(6.6)]{Rodino2}. 


\end{document}